\def\@seccntDot{.}
\def\@seccntformat#1{\csname the#1\endcsname\@seccntDot\hskip 0.5em}
\renewcommand\section{\@startsection{section}{1}{\z@}%
{18\p@ \@plus 6\p@ \@minus 3\p@}%
{9\p@ \@plus 6\p@ \@minus 3\p@}%
{\large\bfseries\boldmath}}
\renewcommand\subsection{\@startsection{subsection}{2}{\z@}%
{12\p@ \@plus 6\p@ \@minus 3\p@}%
{3\p@ \@plus 6\p@ \@minus 3\p@}%
{\bfseries\boldmath}}
\renewcommand\subsubsection{\@startsection{subsubsection}{3}{\z@}%
{12\p@ \@plus 6\p@ \@minus 3\p@}%
{\p@}%
{\bfseries\boldmath}}
\theoremstyle{plain}
\newtheorem{theorem}{Theorem}[section]
\newtheorem{lemma}{Lemma}[section]
\newtheorem{conjecture}{Conjecture}[section]
\theoremstyle{definition}
\newtheorem{remark}{Remark}[section]
\newtheorem{claim}{Claim}
\numberwithin{equation}{section}
\newcommand{\x}{{\bf x}}
\title{Revisiting the Maximum Principal Ratio of Graphs}
\author{
Lele Liu\thanks{College of Science, University of Shanghai for Science and Technology, Shanghai 200093, China
(\texttt{ahhylau@outlook.com})}~,~~
Changxiang He\thanks{College of Science, University of Shanghai for Science and Technology, Shanghai 200093, China
(\texttt{changxiang-he@163.com})}
}
\date{}
\begin{document}
\maketitle

\begin{abstract}
Let $G$ be a connected graph, the principal ratio of $G$ is the ratio of the maximum and minimum entries of its
Perron eigenvector. In 2007, Cioab\v a and Gregory conjectured that among all connected graphs on $n$ vertices,
the kite graph attains the maximum principal ratio. In 2018, Tait and Tobin confirmed the conjecture for 
sufficientlty large $n$. In this article, we show the conjecture is true for all $n\geq 5000$.
\par\vspace{2mm}

\noindent{\bfseries Keywords:} Principal ratio; Principal eigenvector; Irregular graph; Kite graph.
\par\vspace{1mm}

\noindent{\bfseries AMS Classification:} 05C50; 15A18.
\end{abstract}

\section{Introduction}
\label{sec:section-1}

In this paper, we consider only simple, undirected graphs, i.e, undirected graphs without multiple edges or loops. 
The eigenvalues of a graph $G$ are the eigenvalues $\lambda_i(G)$ of its adjacency matrix $A(G)$, and unless we 
indicate otherwise, we shall assume that $\lambda_1(G)\geq\lambda_2(G)\geq\cdots\geq\lambda_n(G)$. For a graph $G$, 
we use $N_G(v)$ and $d_G(v)$ to denote the neighborhood and degree of vertex $v$, respectively. For graph notation 
and terminology undefined here we refer the reader to \cite{BondyMurty2008}.

For a connected graph $G$, the Perron--Frobenius theorem implies that $A(G)$ has a unique positive unit eigenvector 
$\x$ corresponding to $\lambda_1(G)$, and this is called the {\em principal eigenvector} of $G$. Let $x_{\min}$ and 
$x_{\max}$ be the smallest and largest eigenvector entries, respectively. The {\em principal ratio} $\gamma(G)$ of 
$G$ is defined as
\[
\gamma(G):=\frac{x_{\max}}{x_{\min}}.
\]
Note that $\gamma(G)\geq 1$ with equality if and only if $G$ is regular. Thus, one can regard $\gamma(G)$ as a measure 
of the irregularity of $G$. 

In 1958, Schneider \cite{Schneider1958} presented an upper bound on eigenvectors of irreducible nonnegative matrices; 
for graphs it can be described as $\gamma(G)\leq (\lambda_1(G))^{n-1}$. In 2007, Nikiforov \cite{Nikiforov2007} improved 
this result for estimating the gap of spectral radius between $G$ and its proper subgraph. Latter, Cioab\v a and 
Gregory \cite{CioabGregory2007} slightly improved the previous results for $\lambda_1(G)>2$, by proving 
\[
\gamma(G) \leq \frac{\sigma^{d+1} - \sigma^{-(d+1)}}{\sigma-\sigma^{-1}},
\]
where $2\sigma=\lambda_1(G)+\sqrt{\lambda_1(G)^2-4}$, and $d$ is the shortest distance between the vertices attaining
$x_{\min}$ and $x_{\max}$. In addition, they also proved some lower bounds of $\gamma(G)$, which improved previous 
results of Ostrowski \cite{Ostrowski1952} and Zhang \cite{Zhang2005}.

Let $P_r\cdot K_s$ denote the graph of order $(r+s-1)$ attained by identifying an end vertex of a path $P_r$ to any 
vertex of a complete graph $K_s$. This graph $P_r\cdot K_s$ is called a {\em kite graph} or a {\em lollipop graph}. 
In 2007, Cioab\v a and Gregory \cite{CioabGregory2007} posed the following conjecture.

\begin{conjecture}[\cite{CioabGregory2007}]
Among all connected graphs on $n$ vertices, the kite graph attains the maximum principal ratio.
\end{conjecture}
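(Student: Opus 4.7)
The plan is to adopt the overall architecture of the Tait--Tobin proof while tightening every inequality until it holds already for $n\geq 5000$. Let $G^*$ be any extremal graph, $\x$ its Perron eigenvector, $\lambda:=\lambda_1(G^*)$, and let $u,w$ be vertices achieving $x_{\min}$ and $x_{\max}$ respectively. Since each kite $P_{n-s+1}\cdot K_s$ is a valid competitor, I immediately get $\gamma(G^*)\geq\max_s\gamma(P_{n-s+1}\cdot K_s)$, which grows essentially like $\sigma^{n-s}$ at a near-optimal $s$. Feeding this lower bound into the Cioab\v{a}--Gregory inequality
\[
\gamma(G^*)\;\leq\;\frac{\sigma^{d+1}-\sigma^{-(d+1)}}{\sigma-\sigma^{-1}}, \qquad 2\sigma=\lambda+\sqrt{\lambda^{2}-4},
\]
pins $\lambda$ to a narrow window around the spectral radius of the optimal kite and forces $d:=\mathrm{dist}(u,w)$ to be within a constant of $n-s$.

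Next, I would carry out a cascade of local-modification lemmas that constrain the shape of $G^*$. A perturbation argument shows that $u$ must be pendant: otherwise an edge at $u$ can be deleted (or rerouted) without disconnecting $G^*$, and a direct computation with $\x$ shows $\gamma$ strictly increases. Walking toward $w$, one shows the first $t$ steps form a bare path $u=u_0u_1\cdots u_t$ whose internal vertices have degree $2$, and the lower bound on $d$ forces $t$ to be essentially $n-s$. Along this path the Perron equation gives $x_{u_i}\approx c\,\sigma^{-i}$ to a quantifiable tolerance; this near-geometric behavior is the tool that makes every subsequent estimate explicitly numerical. The remainder $H:=V(G^*)\setminus\{u_0,\ldots,u_{t-1}\}$ is a connected subgraph on roughly $s$ vertices on which $\x$ is nearly constant.

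The crux, and the main obstacle, is showing that $H$ is complete and is attached to the path only through $u_t$. For completeness of $H$: any missing edge inside $H$ can be added and the resulting $\lambda_1$-increase estimated by a Rayleigh-quotient calculation that uses the near-uniformity of $\x$ on $H$; that increase is then amplified along the path to produce a strict gain in $\gamma$, contradicting the optimality of $G^*$. For the unique attachment: any extra edge between the path and $H$ shortens the effective geometric decay and can be removed with a strict improvement in $\gamma$ by the same perturbation machinery. Both steps admit sharp quantitative forms once $\lambda$ is trapped in the aforementioned window, but calibrating the constants so that every inequality is strict for all $n\geq 5000$, rather than only asymptotically, is the delicate bookkeeping on which the numerical threshold rests. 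Once $G^*$ is identified as a kite, a routine unimodality analysis of the explicit formula for $\gamma(P_{n-s+1}\cdot K_s)$ in $s$ pins down the unique maximizer and finishes the proof.
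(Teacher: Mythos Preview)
Your plan follows the Tait--Tobin architecture that the paper also adopts, so the overall strategy is sound; the differences are in where the real work sits. The paper imports directly from Tait--Tobin the structural lemma that $v_1,\ldots,v_{k-1}$ is already a pendant path and that $v_k$ is adjacent to every vertex off it, so your ``$u$ is pendant'' and ``first $t$ steps are bare'' stages are taken as given. More importantly, the endgame is organized in the opposite order from yours. You flag ``$H$ is complete'' as the crux and propose an edge-addition Rayleigh argument for it; the paper instead first extends the pendant path one step further by proving $d_{G^*}(v_{k-1})=2$, after which completeness of the clique is a one-liner: $G^*$ is then a spanning subgraph of $P_k\cdot K_{n-k+1}$, equality holds in the Cioab\u{a}--Gregory bound so that $\gamma(G^*)=\dfrac{\sigma^k-\sigma^{-k}}{\sigma-\sigma^{-1}}$, and this quantity is strictly increasing in $\lambda$. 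No perturbation is needed there.

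The place where your outline has a genuine gap is the ``unique attachment'' step. In the paper this is the chain $\lambda<n-k+\tfrac{3}{5}$ (proved by contradiction via \emph{adding} an edge $v_{k-1}z$) $\Rightarrow$ $x_{k-1}<n^{-0.24}$ (by comparison with the kite $P_{k-1}\cdot K_{n-k+2}$) $\Rightarrow$ $d(v_{k-1})=2$ (by \emph{removing} all edges from $v_{k-1}$ into $S$). The middle lemma, that $x_{k-1}$ is polynomially small, is what makes the removal argument close: the $\lambda$-loss from deleting those edges is $O(x_{k-1}^2)$, while the gain in $\gamma$ from replacing the factor $1/x_{k-1}$ by an extra factor of roughly $\lambda$ is of constant order. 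Your sketch does not isolate this step, and ``near-geometric decay along the path'' does not supply it, since $v_{k-1}$ is the \emph{last} path vertex before the maximum and a priori could carry weight close to $1$. Finally, the unimodality analysis in $s$ that you propose at the end is not needed: the theorem only asserts that the extremal graph is \emph{some} kite.
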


In 2018, Tait and Tobin \cite{TaitTobin2018} confirmed this conjecture for sufficientlty large $n$. 

\begin{theorem}[\cite{TaitTobin2018}]
For sufficiently large $n$, the connected graph $G$ on $n$ vertices with largest principal ratio is a kite graph.
\end{theorem}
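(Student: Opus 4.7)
The plan is to take a connected graph $G$ on $n$ vertices maximizing the principal ratio and show, through successive structural constraints, that $G$ must be a kite $P_r\cdot K_s$ with $r+s-1=n$. Let $\x$ denote the Perron eigenvector, $u$ a vertex with $x_u=x_{\max}$, $v$ a vertex with $x_v=x_{\min}$, and $d$ the $u$--$v$ distance in $G$. Since the kite on $n$ vertices already achieves a principal ratio that is exponential in $n$ with $\lambda_1>2$, the extremal $G$ must satisfy $\lambda_1(G)>2+\varepsilon$ for some absolute $\varepsilon>0$; then the Cioab\v a--Gregory bound $\gamma(G)\leq (\sigma^{d+1}-\sigma^{-(d+1)})/(\sigma-\sigma^{-1})$ stated in the excerpt forces $d=\Theta(n)$.

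I would next pin down the structure at the two eigenvector extremes. At the low end, the identity $\lambda_1 x_v=\sum_{w\sim v}x_w$ together with $x_w\geq x_v$ bounds $d_G(v)$; a local switching argument --- replacing any non-geodesic edge incident to $v$ by one lying on the geodesic to $u$ --- should force $v$ to be a pendant, and an inductive version walking up the $u$--$v$ path should force that path to be induced, with eigenvector entries close to those on a semi-infinite ray so that the Cioab\v a--Gregory bound is nearly tight on that segment. At the high end, let $U$ be the set of vertices with eigenvector entry within factor $(1+o(1))$ of $x_{\max}$; adding any missing edge inside $U$ strictly increases $\lambda_1$ and, by a first-order perturbation of Perron entries, raises $x_{\max}$ strictly more than $x_{\min}$, hence raises $\gamma$ --- contradicting extremality. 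Thus $U$ induces a clique, which contains $u$.

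The main obstacle, I expect, is to rule out edges between $U$ and interior vertices of the path, and chords inside the path itself. Removing such an edge lowers $\lambda_1$, so the effect on $\gamma$ is not obviously monotone, and one cannot simply delete edges. My plan is to use a two-sided comparison: upper bound $\gamma(G)$ via Cioab\v a--Gregory applied to $G$, and lower bound $\gamma(P_r\cdot K_s)$ for the true kite on $n$ vertices by an explicit recursive computation of the Perron entries along its path. Any such extra edge in $G$ strictly decreases the attenuation factor $\sigma$ in the Cioab\v a--Gregory bound relative to that of the kite of the same clique size, which --- combined with the constraint $d\leq n-|U|+1$ --- should yield the strict inequality $\gamma(G)<\gamma(P_r\cdot K_s)$, contradicting extremality.

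Finally, once $G$ is known to be some kite, the optimization of $\gamma(P_r\cdot K_s)$ over $r+s-1=n$ reduces to a one-parameter problem that I would solve asymptotically using explicit expressions for the Perron eigenvector of a kite (the eigenvector satisfies a linear recurrence on the path with boundary conditions determined by the clique). For the theorem as stated it suffices to run this optimization in the limit $n\to\infty$; to push the threshold down to $n\geq 5000$ as in the abstract, one then has to carefully track the implied constants and error terms in each of the three structural steps above. I expect the final constant-tracking bookkeeping --- rather than any single qualitative step --- to absorb most of the remaining technical work.
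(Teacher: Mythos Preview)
Your outline shares the broad architecture of the paper's argument, but step~4 contains a genuine gap, and the order of steps~3 and~4 is backwards relative to what actually works. Your claim that an extra edge between the path and the clique ``strictly decreases the attenuation factor $\sigma$'' is the wrong way round: adding edges increases $\lambda_1$ and hence $\sigma$, so the Cioab\v a--Gregory upper bound on $\gamma(G)$ goes \emph{up}. A direct comparison of that upper bound against the exact $\gamma$ of a kite therefore cannot produce $\gamma(G)<\gamma(P_r\cdot K_s)$, and your first-order perturbation idea for $x_{\max}$ versus $x_{\min}$ in step~3 is not how the clique step is closed either.

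The paper (following Tait--Tobin, cited here as \autoref{lem:pendant-path}) first gets that $v_1,\dots,v_{k-1}$ is already a \emph{pendant} path --- not merely induced --- and that $v_k$ is joined to all of $S:=V(G^*)\setminus\{v_1,\dots,v_k\}$. The only structural question left is whether $v_{k-1}$ has neighbours in $S$. This is settled by local modification of $G^*$, not by comparison with a kite: one first proves $\lambda<n-k+3/5$ (\autoref{lem:small-lambda}, by showing that otherwise adding a single edge $v_{k-1}z$ would raise $\gamma$), and this tight bound, compared against $\gamma(P_{k-1}\cdot K_{n-k+2})$, forces $x_{k-1}<n^{-0.24}$ (\autoref{lem:xk-1-upper-bound}). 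With $x_{k-1}$ this small, deleting all edges from $v_{k-1}$ into $S$ changes $\lambda$ by only $O(n^{-0.46})$; meanwhile the eigenvalue equation at $v_{k-1}$ shows that having even one such edge forces $x_{k-1}>1.87/\lambda$ rather than $\approx 1/\lambda$, which depresses $\gamma(G^*)$ by a factor close to $2$ in the formula $\gamma(G^*)=\big(\sigma^{k-1}-\sigma^{-(k-1)}\big)/\big((\sigma-\sigma^{-1})\,x_{k-1}\big)$. Hence removing those edges strictly increases $\gamma$ (\autoref{lem:degree-vk-1-two}). Only after the full pendant path $v_1,\dots,v_k$ is established does the clique step become trivial: $\gamma(G^*)$ is then exactly $(\sigma^k-\sigma^{-k})/(\sigma-\sigma^{-1})$, which is monotone in $\lambda$, so any missing edge inside $S$ contradicts maximality. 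Your plan is missing the key quantitative control on $x_{k-1}$ and the ``modify $G^*$'' mechanism that replaces the failed direct comparison.
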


In this article, we prove that the conjecture is true for all $n\geq 5000$.

\begin{theorem}\label{thm:Main-result}
Among all connected graphs on $n\geq 5000$ vertices, the kite graph attains the maximum principal ratio.
\end{theorem}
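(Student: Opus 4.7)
The plan is to revisit the asymptotic argument of Tait and Tobin \cite{TaitTobin2018} and to replace every occurrence of ``for $n$ sufficiently large'' with an explicit inequality valid for all $n\geq 5000$. Let $G$ be an extremal connected graph on $n$ vertices, with principal eigenvector $\x$, and let $u$, $v$ attain $x_{\max}$ and $x_{\min}$ respectively. As an anchoring step, I would first compute $\gamma(P_r\cdot K_s)$ on a concrete kite, using the fact that the principal eigenvector of a kite admits a closed-form description along the path (geometrically in $\sigma$) together with an explicit ratio at the clique-path junction. Combined with the Cioab\v a--Gregory upper bound $\gamma(G)\leq (\sigma^{d+1}-\sigma^{-(d+1)})/(\sigma-\sigma^{-1})$, this concrete lower bound on $\gamma(G)$ forces $\lambda_1(G)>\lambda_0$ and consequently $\sigma>\sigma_0$ for explicit constants, giving quantitative control on every geometric factor that will appear in the subsequent steps.

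Next, I would carry out the structural reduction in three stages, as in \cite{TaitTobin2018}, but tracking explicit constants throughout. The first stage shows $d_G(v)=1$: if $v$ had another neighbor $w$, the extremality of $G$ forces $w$ to have further neighbors, so removing the edge $vw$ keeps the graph connected, strictly decreases $x_v$, and bounds the change of $x_{\max}$ through the Rayleigh quotient, contradicting maximality of $\gamma$. The second stage shows that the shortest $u$--$v$ path is induced, that every interior vertex of this path has degree exactly $2$, and that no additional vertex of $G$ attaches to the path off of its endpoints. The third stage shows that the remaining ``dense'' subgraph near $u$ is a clique whose only outside edge is the first path edge. After these three stages $G$ must be a kite $P_r\cdot K_s$ with $r+s=n+1$, and a direct one-parameter optimization of $\gamma(P_r\cdot K_s)$ in $s$ identifies the unique extremal kite for each $n$.

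The main obstacle will be the second stage, specifically the assertion that every interior path vertex has degree exactly $2$ and that no extra vertices hang off the path. The corresponding argument in \cite{TaitTobin2018} compares $\gamma$ before and after a local edge modification, and generates several error terms---arising from Rayleigh-quotient perturbation, from eigenvalue interlacing on the modified graph, and from the geometric behavior $x_{v_i}\asymp \sigma^{-i}$ of the eigenvector along the tail---that are all $o(1)$ as $n\to\infty$ but must be controlled simultaneously at $n=5000$. The cleanest route I foresee is to exploit the anchoring bound $\sigma>\sigma_0$ to make the geometric decay strong enough that the leading term dominates all error terms starting exactly at $n=5000$; this reduces the theorem to a short list of explicit numerical inequalities in $\sigma$, $n$, $r$, $s$ that I would verify directly at the end.
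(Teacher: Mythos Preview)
Your outline follows the Tait--Tobin architecture, and the paper does too; but your plan underestimates where the real work lies and misidentifies the mechanism that makes it go through at $n=5000$. The paper \emph{cites} Tait--Tobin directly for your Stage~1 and most of Stage~2 (their Lemmas 3.2 and 3.6 already give that $v_1,\dots,v_{k-1}$ is a pendant path and that $v_k$ is joined to every off-path vertex), so none of that needs to be redone. The only structural point left open by those citations is whether $v_{k-1}$ has neighbours in $S:=V(G^*)\setminus\{v_1,\dots,v_k\}$, and once $d(v_{k-1})=2$ is established, showing $G^*[S]$ is a clique is a two-line monotonicity argument---no one-parameter optimisation over kites is required, since the theorem only asserts that \emph{some} kite is extremal.

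The gap in your plan is the treatment of $d(v_{k-1})=2$. You propose to control the perturbation errors using only an anchoring lower bound $\sigma>\sigma_0$, but a lower bound on $\sigma$ (equivalently on $\lambda$) is not what drives this step. The obstruction is that deleting the edges from $v_{k-1}$ into $U:=S\cap N(v_{k-1})$ decreases $\lambda$ by roughly $2|U|x_{k-1}/\lambda$, and this loss is only beaten by the gain of one extra pendant step if $x_{k-1}$ is \emph{polynomially small} in $n$. The paper gets this via a two-step bootstrap you do not mention: first, an \emph{edge-addition} contradiction (if $\lambda\geq n-k+3/5$ then adding a missing edge at $v_{k-1}$ would increase $\gamma$) tightens the trivial bound $\lambda<n-k+1$ to $\lambda<n-k+3/5$; second, comparing $\gamma(G^*)$ with $\gamma(P_{k-1}\cdot K_{n-k+2})$ and using this sharpened $\lambda$-bound forces $x_{k-1}<n^{-0.24}$. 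Both steps in turn rest on tight two-sided bounds for $k$ of the form $n-\tfrac{n}{\log n}(1+O((\log n)^{-1/2}))<k<n-\tfrac{n}{\log n}(1-O((\log n)^{-1}))$, obtained by comparing $G^*$ to carefully chosen kites. None of this is captured by a single inequality $\sigma>\sigma_0$; without the refined upper bound on $\lambda$ and the resulting smallness of $x_{k-1}$, the edge-deletion comparison at $v_{k-1}$ does not close.
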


\section{Preliminaries}

Throughout this paper $G$ will be a connected simple graph on $n$ vertices. Let $\x$ be the principal eigenvector of 
$G$, and $x_{v_i}$ be the entry of $\x$ corresponding to $v_i\in V(G)$. For convenience, we always scale the principal 
eigenvector $\x$ of $G$ so that its maximum entry is $1$. 

As mentioned in Section \ref{sec:section-1}, for a graph $G$ with $\lambda_1(G)\geq 2$ we denote
\begin{equation}\label{eq:sigma-G}
\sigma(G)=\frac{\lambda_1(G)+\sqrt{\lambda_1(G)^2-4}}{2}.
\end{equation}
Recall that a {\em pendant path} is a path with one of its end vertices having degree one and all the internal 
vertices having degree two. 

The following lemma give an upper bound for principal ratio $\gamma(G)$ in terms of $\sigma(G)$.

\begin{lemma}[\cite{TaitTobin2018}]\label{lem:gamma-upper-bound}
Suppose that $v_1$ and $v_k$ are the vertices with minimum and maximum eigenvector entries, respectively. 
Let $\lambda:=\lambda_1(G) > 2$ and $\sigma:=\sigma(G)$, then for $1\leq j\leq k$, 
\[
\gamma(G)\leq\frac{\sigma^j-\sigma^{-j}}{\sigma-\sigma^{-1}}\cdot \frac{1}{x_{v_j}}.
\]
Moreover, equality holds if the vertices $v_1$,$v_2$,$\ldots$, $v_j$ form a pendant path.
\end{lemma}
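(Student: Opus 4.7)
The plan is to exploit the eigenvalue equation along the (shortest) path $v_1, v_2, \ldots, v_j$ and compare the actual entries $x_{v_i}$ with an auxiliary sequence obeying the associated second-order linear recurrence, whose characteristic polynomial $t^2 - \lambda t + 1$ has roots $\sigma$ and $\sigma^{-1}$. Since the principal eigenvector is scaled so that $x_{v_k} = 1$, we have $\gamma(G) = 1/x_{v_1}$, and the target inequality is equivalent to
\[
\frac{x_{v_j}}{x_{v_1}} \;\leq\; \frac{\sigma^j - \sigma^{-j}}{\sigma - \sigma^{-1}}.
\]

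First I would rewrite the eigenequation at $v_i$ in the form $x_{v_{i+1}} = \lambda x_{v_i} - x_{v_{i-1}} - \epsilon_i$, where, with the convention $x_{v_0} := 0$, the quantity $\epsilon_i \geq 0$ is the sum of entries at the neighbors of $v_i$ lying off the path (the case $i=1$ absorbs all neighbors of $v_1$ except $v_2$ into $\epsilon_1$). Next I would introduce the comparison sequence $y_i$ defined by $y_0 = 0$, $y_1 = x_{v_1}$, and $y_{i+1} = \lambda y_i - y_{i-1}$; solving this recursion explicitly gives
\[
y_i \;=\; x_{v_1}\cdot\frac{\sigma^i - \sigma^{-i}}{\sigma - \sigma^{-1}}.
\]

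The heart of the argument is to prove $x_{v_i} \leq y_i$ for every $i \leq j$. Setting $\delta_i := y_i - x_{v_i}$ yields $\delta_0 = \delta_1 = 0$ together with $\delta_{i+1} = \lambda \delta_i - \delta_{i-1} + \epsilon_i$. A naive induction stumbles because of the sign of the $-\delta_{i-1}$ term; this is the main technical obstacle. The fix I plan to use is to factor the recursion through $a_i := \delta_{i+1} - \sigma \delta_i$, which satisfies $a_i = \sigma^{-1} a_{i-1} + \epsilon_i$ with $a_0 = 0$. Since $\sigma^{-1} > 0$ and $\epsilon_i \geq 0$, an immediate induction gives $a_i = \sum_{k=1}^{i} \sigma^{k-i}\epsilon_k \geq 0$, whence $\delta_{i+1} \geq \sigma \delta_i$; combined with $\delta_0 = 0$ and $\sigma > 1$ (which holds since $\lambda > 2$), this forces $\delta_i \geq 0$ throughout, establishing the inequality.

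Finally, the equality clause follows cleanly: if $v_1, v_2, \ldots, v_j$ form a pendant path then $v_1$ has degree one in $G$ and each internal $v_i$ has degree exactly two, so every $\epsilon_i$ vanishes, all $\delta_i$ equal zero, and each step in the chain of inequalities becomes an equality.
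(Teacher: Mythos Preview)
Your argument is correct. The paper does not supply its own proof of this lemma; it is quoted verbatim as a result of Tait and Tobin \cite{TaitTobin2018} (and, earlier, Cioab\u{a}--Gregory), so there is nothing in the present paper to compare against beyond the statement itself. Your approach --- rewriting the eigenvalue equation along the path as $x_{v_{i+1}}=\lambda x_{v_i}-x_{v_{i-1}}-\epsilon_i$ with $\epsilon_i\ge 0$, introducing the Chebyshev-type comparison sequence $y_i=x_{v_1}\,(\sigma^i-\sigma^{-i})/(\sigma-\sigma^{-1})$, and then factoring the difference recursion via $a_i=\delta_{i+1}-\sigma\delta_i$ to force $\delta_i\ge 0$ --- is exactly the standard proof, and the equality analysis for the pendant-path case is handled correctly.
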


\begin{lemma}\label{lem:exact-sigma-bound}
For a connected graph $G$, let $\lambda:=\lambda_1(G) > 2$ and $\sigma:=\sigma(G)$. If $j\geq 2$, then 
\begin{equation}\label{eq:exact-sigma-bound}
\Big(\lambda - \frac{1}{\lambda - 1}\Big)^{j-2}\lambda
\leq \frac{\sigma^j-\sigma^{-j}}{\sigma - \sigma^{-1}}
\leq \Big(\lambda - \frac{1}{\lambda}\Big)^{j-2}\lambda.
\end{equation}
\end{lemma}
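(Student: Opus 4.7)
}
The plan is to exploit the Chebyshev-type recursion satisfied by $f(j):=(\sigma^{j}-\sigma^{-j})/(\sigma-\sigma^{-1})$. Since $\sigma+\sigma^{-1}=\lambda$ and $\sigma\cdot\sigma^{-1}=1$, a direct expansion gives the three-term recurrence
\[
f(j)=\lambda f(j-1)-f(j-2),\qquad f(1)=1,\ f(2)=\lambda.
\]
Introduce the consecutive ratios $r_{j}:=f(j)/f(j-1)$ (all positive, since $\sigma>1$). The recursion rewrites as $r_{j}=\lambda-1/r_{j-1}$ with $r_{2}=\lambda$. The inequality then reduces to proving
\[
\lambda-\frac{1}{\lambda-1}\ \le\ r_{j}\ \le\ \lambda-\frac{1}{\lambda}\qquad(j\ge 3),
\]
after which both halves of \eqref{eq:exact-sigma-bound} follow by telescoping $f(j)=r_{j}r_{j-1}\cdots r_{3}\cdot f(2)=r_{j}\cdots r_{3}\cdot\lambda$.

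For the upper bound, I would first compute $r_{3}=\lambda-1/\lambda$ directly from $r_{2}=\lambda$, and then show by induction that $r_{j}$ is strictly decreasing for $j\ge 2$: if $r_{j-1}<r_{j-2}$ then $1/r_{j-1}>1/r_{j-2}$, so $r_{j}=\lambda-1/r_{j-1}<\lambda-1/r_{j-2}=r_{j-1}$. Hence $r_{j}\le r_{3}=\lambda-1/\lambda$ for every $j\ge 3$, which is the desired upper estimate.

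For the lower bound, observe that $\sigma$ is the (attractive) fixed point of the map $t\mapsto\lambda-1/t$, so the decreasing sequence $(r_{j})_{j\ge 2}$ stays above $\sigma$; in fact an easy induction (using $r_{2}=\lambda\ge\sigma$ and monotonicity of $t\mapsto\lambda-1/t$ on $t>0$) gives $r_{j}\ge\sigma$ for all $j\ge 2$. It therefore suffices to verify $\sigma\ge\lambda-\tfrac{1}{\lambda-1}$, which is equivalent to $\sigma\ge\lambda-1$, i.e.\ $\sqrt{\lambda^{2}-4}\ge\lambda-2$; this is immediate from $\lambda^{2}-4=(\lambda-2)(\lambda+2)\ge(\lambda-2)^{2}$ when $\lambda>2$. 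Combining and multiplying $j-2$ factors of $r_{k}$ yields the left-hand inequality.

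No step looks to be a serious obstacle: the only mildly delicate point is ensuring monotonicity of the ratios $r_{j}$ is used in the correct direction at the endpoints $j=2,3$ (where the bounds are sharp, both sides equalling $\lambda$ when $j=2$), so I would explicitly treat $j=2$ and $j=3$ as the base cases and then invoke the monotonicity/fixed-point argument for $j\ge 4$.
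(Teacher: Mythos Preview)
Your proof is correct and follows the same overall telescoping strategy as the paper: both arguments bound the consecutive ratios $r_i=\dfrac{\sigma^{i}-\sigma^{-i}}{\sigma^{i-1}-\sigma^{-(i-1)}}$ for $i\ge 3$ by $\lambda-\tfrac{1}{\lambda-1}$ from below and $\lambda-\tfrac{1}{\lambda}$ from above, and then multiply these $j-2$ inequalities together with $f(2)=\lambda$.

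The difference lies only in how each ratio is bounded. The paper simply computes, for each $i\ge 3$, the differences $r_i-(\lambda-\tfrac{1}{\lambda-1})$ and $r_i-(\lambda-\tfrac{1}{\lambda})$ by brute algebraic simplification and checks their signs. You instead observe that the recursion $r_i=\lambda-1/r_{i-1}$ makes $(r_i)_{i\ge 2}$ a strictly decreasing sequence starting at $r_3=\lambda-1/\lambda$ and bounded below by the fixed point $\sigma$, and then verify $\sigma\ge\lambda-\tfrac{1}{\lambda-1}$ via the equivalent inequality $\sigma\ge\lambda-1$. Your route is a bit more conceptual (it explains \emph{why} the ratios sit in that interval: they are iterates of a monotone map descending to $\sigma$), while the paper's is purely computational; both give the same bounds with the same telescoping conclusion.
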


\begin{proof}
If $j=2$, the assertion holds trivially, so we assume that $j\geq 3$. We start with the proof of the left-hand 
side of \eqref{eq:exact-sigma-bound}. In view of \eqref{eq:sigma-G}, $\lambda=\sigma+\sigma^{-1}$. Let 
$i\geq 3$, then 
\begin{align*}
\frac{\sigma^i-\sigma^{-i}}{\sigma^{i-1}-\sigma^{-(i-1)}}
-\Big(\lambda-\frac{1}{\lambda-1}\Big)
& = \frac{\sigma^i-\sigma^{-i}}{\sigma^{i-1}-\sigma^{-(i-1)}}
-\Big((\sigma+\sigma^{-1})-\frac{1}{\sigma+\sigma^{-1}-1}\Big) \\
& = \frac{\sigma^{i-3}}{\sigma^{i-1} - \sigma^{-(i-1)}}\cdot\frac{1}{\lambda-1}
(\sigma-1+\sigma^{-2i+6}-\sigma^{-2i+5}) \\
& >0,
\end{align*}
which implies that
\[
\frac{\sigma^i-\sigma^{-i}}{\sigma^{i-1}-\sigma^{-(i-1)}} 
> \lambda-\frac{1}{\lambda-1},~~ i\geq 3.
\]
Multiplying these inequalities for $i=3,\ldots,j$ and noting that $\sigma+\sigma^{-1}=\lambda$, we obtain 
the desired inequality.

Next we shall prove the right-hand side. Likewise, for $i\geq 3$ we see that
\[ 
\frac{\sigma^i-\sigma^{-i}}{\sigma^{i-1}-\sigma^{-(i-1)}}
- \Big(\lambda-\frac{1}{\lambda}\Big)
= \frac{\sigma^{i-3}}{\lambda (\sigma^{i-1} - \sigma^{-(i-1)})} (\sigma^{-2i+6} -1)
< 0.
\]
The inequality follows by analogous arguments as above.
\end{proof}

\begin{lemma}\label{lem:f-Monotonicity}
Let $n\geq e^e$, the function $f_n(x)=(n-x)^x$ is increasing in $x$ whenever
$1 < x < n-\frac{n}{\log n} \big(1+\frac{1}{\sqrt{\log n}}\big)$, and decreasing 
in $x$ whenever $n-\frac{n}{\log n}\big(1+\frac{1}{\log n}\big) < x < n$.
\end{lemma}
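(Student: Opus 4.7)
My plan is to reduce the monotonicity question to the sign of a single auxiliary function obtained from the logarithmic derivative. Since $f_n(x)>0$ on $(0,n)$ and $\log f_n(x)=x\log(n-x)$, the sign of $f_n'(x)$ agrees with that of
\[
g(x) := \log(n-x) - \frac{x}{n-x}.
\]

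First I would check that $g$ is strictly decreasing on $(0,n)$: direct differentiation gives $g'(x) = -\tfrac{1}{n-x} - \tfrac{n}{(n-x)^2} < 0$. Combined with $g(x)\to+\infty$ as $x\to 0^+$ and $g(x)\to-\infty$ as $x\to n^-$, this yields a unique zero $x^*\in(0,n)$, so that $f_n$ is strictly increasing on $(0,x^*)$ and strictly decreasing on $(x^*,n)$.

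The problem then reduces to sandwiching $x^*$ between the two stated thresholds $x_1 = n - \tfrac{n}{\log n}(1+\tfrac{1}{\sqrt{\log n}})$ and $x_2 = n - \tfrac{n}{\log n}(1+\tfrac{1}{\log n})$; since $g$ is decreasing, this is equivalent to verifying $g(x_1)\geq 0$ and $g(x_2)\leq 0$. Substituting and writing $L=\log n$, both evaluations collapse to one-variable inequalities in $L$ after an elementary polynomial division applied to $\tfrac{x_i}{n-x_i}=\tfrac{n}{n-x_i}-1$. I expect the dominant comparison for $g(x_1)$ to be $\sqrt{L}$ against $\log L$, handled by the elementary bound $\sqrt{L}\geq \log L$ for $L\geq 1$, and for $g(x_2)$ to be the constant $2$ against $\log L$, which goes through once $L$ is large enough.

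The step I expect to be the main obstacle is the decreasing case $g(x_2)\leq 0$: the margin is much narrower than in the increasing case, and the threshold on $L$ must be pushed down to match the hypothesis on $n$. I would control the residual $\log(1+1/L)-\tfrac{1}{L+1}$ via $\log(1+y)\leq y$ and, if needed, a second-order Taylor refinement, then verify the sign at the boundary value of $L$ and rely on monotonicity in $L$ beyond that.
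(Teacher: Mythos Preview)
Your approach is essentially identical to the paper's: both take the logarithmic derivative to obtain $g(x)=\log(n-x)-\frac{x}{n-x}$ (the paper writes it equivalently as $\log(n-x)-\frac{n}{n-x}+1$, precisely via the identity you mention), note that $g$ is strictly decreasing, and reduce the claim to verifying $g(x_1)>0$ and $g(x_2)<0$, with the former governed by $\sqrt{L}$ versus $\log L$ and the latter by a constant versus $\log L$. One small slip---$g(x)\to\log n$, not $+\infty$, as $x\to 0^+$---is harmless for the argument, and your instinct that the decreasing case is the delicate one is exactly right: the paper's appendix computation there leaves essentially no slack.
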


\begin{proof}
Taking the derivative of $f_n(x)$ we find that
\[
\frac{f_n'(x)}{f_n(x)}=\log (n-x)-\frac{n}{n-x}+1: =g(x).
\] 
Obviously, $g(x)$ is decreasing in $x$. By some algebra (details can be found in Appendix \ref{appendix:g-vs-zero}), 
we have 
\begin{equation}\label{eq:g-vs-zero}
g\Big(n-\frac{n}{\log n} \Big(1+\frac{1}{\sqrt{\log n}}\Big)\Big) > 0, \ \text{and}\
g\Big(n-\frac{n}{\log n}\Big(1+\frac{1}{\log n}\Big)\Big) < 0. 
\end{equation}
The results follow from the monotonicity of $g(x)$.
\end{proof}

\section{Proof of \autoref{thm:Main-result}}

Let $G^*$ be a graph with maximum principal ratio among all connected graphs on $n$ vertices. Let $\x$ be the 
principal eigenvector of $G^*$ with maximum entry is $1$, and $(k-1)$ be the shortest distance between a vertex 
on which $\x$ component is minimum and a vertex on which it is maximum. Suppose that $x_{v_1}=x_{\min}$ and 
$x_{v_k}=x_{\max}$, and $v_1,v_2,\ldots,v_k$ is a path between $v_1$ and $v_k$. 

Throughout this paper, we set $\lambda:=\lambda_1(G^*)$, $\sigma:=\sigma(G^*)$ and $N(v):=N_{G^*}(v)$; and write 
$x_i:=x_{v_i}$ for brevity. Let us note that most of our results are about $n\geq 5000$, so we tacitly assume 
that $n\geq 5000$.

We need the following facts, which are Lemma 3.2 and Lemma 3.6 in \cite{TaitTobin2018}.

\begin{lemma}[\cite{TaitTobin2018}]\label{lem:pendant-path}
The vertices $v_1,v_2,\ldots,v_{k-1}$ form a pendant path in $G^*$, and $v_k$ is connected to each vertex in $G^*$ 
that is not on this path.    
\end{lemma}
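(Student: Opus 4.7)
The plan is to establish both assertions by contradiction: if $G^*$ violates the claimed structure, I would exhibit a local modification producing a connected graph $G'$ with $\gamma(G')>\gamma(G^*)$, contradicting the extremality of $G^*$.

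For the pendant path, I would proceed by assuming some vertex $v_j$ with $1\leq j\leq k-1$ has a neighbor $w\notin\{v_{j-1},v_{j+1}\}$ (with $v_0$ absent). Because $v_1v_2\cdots v_k$ is a geodesic, $w$ cannot shortcut it: in particular $w\neq v_k$, and $w$ lies at substantial distance from both $v_1$ and $v_k$. If $v_k$ already has a non-neighbor $u$ off the path, set $G':=G^*-v_jw+v_ku$; otherwise set $G':=G^*-v_jw$. In either case $G'$ remains connected because the $v_1$--$v_k$ geodesic survives. The Rayleigh quotient with the original eigenvector $\x$ gives
\[
\lambda(G')-\lambda \;\geq\; 2\bigl(x_{v_k}x_u-x_{v_j}x_w\bigr),
\]
which is strictly positive because $x_{v_k}=1$ dominates while $x_{v_j}$ and $x_w$ are forced to be small by \autoref{lem:exact-sigma-bound} (both $v_j$ and $w$ lie at positive distance from $v_k$). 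Writing $\lambda(G')\,x'_{v_1}=\sum_{z\sim_{G'}v_1}x'_z$ and comparing with the corresponding equation in $G^*$ then gives $x'_{v_1}/x'_{v_k}<x_{v_1}/x_{v_k}$, yielding $\gamma(G')>\gamma(G^*)$.

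For the second claim, suppose $v_k\not\sim u$ for some $u\notin\{v_1,\ldots,v_{k-1}\}$ and set $G':=G^*+v_ku$. Then certainly $\lambda(G')>\lambda$. Once the pendant-path portion has been established (so one may assume it when attacking the domination claim), the eigenvalue equations along the surviving pendant path force the new eigenvector $\x'$, normalized to maximum $1$, to be monotonically increasing from $v_1$ to $v_{k-1}$, with the minimum attained at $v_1$ and the maximum near $v_k$. Combining \autoref{lem:gamma-upper-bound} along the pendant path with \autoref{lem:exact-sigma-bound}, the new ratio $\gamma(G')$ is bounded below by an explicit increasing function of $\lambda(G')$ evaluated at $j=k-1$; since $\lambda(G')>\lambda$, this lower bound strictly exceeds $\gamma(G^*)$, giving the required contradiction.

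The main obstacle is controlling how the eigenvector entries change under the local modification. Eigenvalue changes are cleanly captured by Rayleigh quotients, but individual entries move in a coupled way. The cleanest handle appears to be the sharpness clause of \autoref{lem:gamma-upper-bound} (pendant paths realize equality in the eigenvector-ratio bound) combined with \autoref{lem:exact-sigma-bound}, which expresses the relevant hyperbolic quantities purely in terms of $\lambda$ and thereby reduces the comparison to monotonicity of an explicit function of $\lambda$. A secondary subtlety is the order of the two claims: one ordinarily proves the pendant-path structure first, possibly one "extra edge" at a time, and only then uses it to tackle the domination claim.
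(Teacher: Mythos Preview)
The paper does not prove this lemma; it is quoted from \cite{TaitTobin2018} (their Lemmas~3.2 and~3.6), so there is no in-paper argument to compare against. Your sketch, however, has genuine gaps that would have to be closed before it could stand as a proof.

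For the pendant-path part, the key inequality $x_{v_k}x_u>x_{v_j}x_w$ is asserted on the grounds that $x_{v_j}$ and $x_w$ are ``small'' by \autoref{lem:exact-sigma-bound}. That lemma only bounds the quantity $(\sigma^j-\sigma^{-j})/(\sigma-\sigma^{-1})$; it says nothing directly about eigenvector entries. More importantly, for $j$ close to $k-1$ the entry $x_{v_j}$ is not small a priori: the paper later needs the full strength of \autoref{lem:bound-k}--\autoref{lem:small-lambda} (all of which \emph{depend} on the present lemma) just to show $x_{k-1}<n^{-0.24}$. Nor is $x_u$ controlled from below---it could be as small as $x_{v_1}$. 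In your ``otherwise'' branch $G'=G^*-v_jw$ the Rayleigh bound goes the wrong way ($\lambda(G')\le\lambda$), and you give no argument for that case at all. Finally, even granting $\lambda(G')>\lambda$, the sentence ``comparing the eigenvalue equation at $v_1$ gives $x'_{v_1}/x'_{v_k}<x_{v_1}/x_{v_k}$'' is precisely the content that needs proof: all entries of $\x'$ move at once, and one local equation does not determine the ratio.

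For the domination part, you claim that \autoref{lem:gamma-upper-bound} at $j=k-1$ yields a lower bound on $\gamma(G')$ that is ``an explicit increasing function of $\lambda(G')$''. But the equality clause gives
\[
\gamma(G')=\frac{(\sigma')^{k-1}-(\sigma')^{-(k-1)}}{\sigma'-(\sigma')^{-1}}\cdot\frac{1}{x'_{k-1}},
\]
which still contains the unknown $x'_{k-1}$. Since the lemma does \emph{not} assert $d_{G^*}(v_{k-1})=2$, you cannot push the equality to $j=k$ and absorb the eigenvector entry into $x'_{k}=1$. Controlling how $x_{k-1}$ moves under edge additions at $v_k$ is exactly the difficulty that drives the substantial work in \autoref{lem:small-lambda}--\autoref{lem:degree-vk-1-two}; your outline skips over it.
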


\begin{remark}
By \autoref{lem:pendant-path}, $d_{G^*}(v_k)=n-k+1$. This implies that $d_{G^*}(v)\leq n-k+1$ for each $v\in V(G^*)\setminus\{v_1,\ldots,v_k\}$.
Since $x_k=x_{\max}$, we see $d_{G^*}(v_{k-1})\leq n-k+1$. Thus, the maximum degree of $G^*$ is $n-k+1$, and therefore
$\lambda<n-k+1$.
\end{remark}

As for the lower bound of $\lambda$, Tait and Tobin \cite{TaitTobin2018} proved the following result.

\begin{lemma}[\cite{TaitTobin2018}]
$\lambda > n-k$.
\end{lemma}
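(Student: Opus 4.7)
The plan is to compare $G^*$ with the specific kite graph $H = P_k \cdot K_{n-k+1}$, observe that both graphs share the same pendant-path structure so that the bound of \autoref{lem:gamma-upper-bound} becomes an equality for $H$, and then combine the elementary fact $\lambda_1(H) > n-k$ with the monotonicity of the bounding function in $\sigma$. First, I would apply \autoref{lem:gamma-upper-bound} to $G^*$ at $j = k$: using \autoref{lem:pendant-path} (so that $v_1,\ldots,v_{k-1}$ form a pendant path) and the normalization $x_k = 1$, this produces
\[
\gamma(G^*) \leq \frac{\sigma^k - \sigma^{-k}}{\sigma - \sigma^{-1}}.
\]

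Next, I would analyze $H = P_k \cdot K_{n-k+1}$, whose path vertices I label $u_1, u_2, \ldots, u_k$ and whose clique is on $\{u_k, w_1, \ldots, w_{n-k}\}$. Since $H$ properly contains $K_{n-k+1}$ as a subgraph (for $k \geq 2$), Perron--Frobenius strict monotonicity yields $\mu := \lambda_1(H) > \lambda_1(K_{n-k+1}) = n - k$. Writing the eigenvalue equation at a clique vertex $w_i$ and using the symmetry $x_{w_1} = \cdots = x_{w_{n-k}}$ gives $x_{u_k}/x_{w_i} = \mu - (n-k-1) > 1$, while the entries along the pendant path $u_1, \ldots, u_{k-1}$ strictly increase toward $u_k$; hence the principal eigenvector of $H$ attains its minimum at $u_1$ and its maximum at $u_k$. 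Moreover $u_1, u_2, \ldots, u_k$ form a pendant path in the sense of the paper's definition, since $u_1$ has degree $1$, $u_2, \ldots, u_{k-1}$ have degree $2$, and $u_k$ is the other (high-degree) endpoint. The equality case of \autoref{lem:gamma-upper-bound} applied to $H$ at $j = k$ therefore gives
\[
\gamma(H) = \frac{\sigma_H^k - \sigma_H^{-k}}{\sigma_H - \sigma_H^{-1}}, \qquad \sigma_H := \sigma(H).
\]

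Finally, extremality of $G^*$ gives $\gamma(G^*) \geq \gamma(H)$, and combining with the two displays above yields
\[
\frac{\sigma^k - \sigma^{-k}}{\sigma - \sigma^{-1}} \geq \frac{\sigma_H^k - \sigma_H^{-k}}{\sigma_H - \sigma_H^{-1}}.
\]
The function $\sigma \mapsto (\sigma^k - \sigma^{-k})/(\sigma - \sigma^{-1}) = \sigma^{k-1} + \sigma^{k-3} + \cdots + \sigma^{-(k-1)}$ is strictly increasing on $(1, \infty)$, so $\sigma \geq \sigma_H$, and hence $\lambda = \sigma + \sigma^{-1} \geq \mu > n - k$. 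The main obstacle I foresee is the verification in the middle paragraph that $H$'s principal eigenvector is extremized at the two pendant-path endpoints so that the equality case of \autoref{lem:gamma-upper-bound} is legitimately invoked; the remaining steps are either immediate (extremality, Perron--Frobenius strict monotonicity) or routine (strict increase of the Chebyshev-type function), and the edge cases with $n - k \leq 1$ reduce to trivialities.
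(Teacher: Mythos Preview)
The paper does not supply its own proof of this lemma; it is quoted verbatim from Tait and Tobin \cite{TaitTobin2018} without argument. Your proof is correct and, in fact, is in the same spirit as comparisons the present paper makes elsewhere (e.g.\ in the proof of \autoref{lem:bound-k} and in the final proof of \autoref{thm:Main-result}, where the monotonicity of $\sigma\mapsto(\sigma^k-\sigma^{-k})/(\sigma-\sigma^{-1})$ is invoked explicitly).

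One small omission: when you conclude that the principal eigenvector of $H$ attains its minimum at $u_1$, you have checked $x_{u_1}<x_{u_2}<\cdots<x_{u_k}$ and $x_{w_i}<x_{u_k}$, but not $x_{u_1}<x_{w_i}$. This is immediate, since $x_{w_i}=x_{u_k}/(\mu-n+k+1)>x_{u_k}/2$, whereas the recursion along the path gives $x_{u_1}\le x_{u_k}/(\mu-1)^{k-1}<x_{u_k}/2$ for $k\ge 2$ and $\mu>3$. With that detail filled in, your argument is complete.
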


Next, we shall show that the degree of $v_{k-1}$ is exactly $2$ in $G^*$, which together with \autoref{lem:pendant-path} 
implies that $v_1,v_2,\ldots,v_k$ is a pendant path. With this in hand, we prove that $V(G^*)\setminus\{v_1,\ldots,v_k\}$
induced a clique, and therefore completing the proof of \autoref{thm:Main-result}.

To prove our main result, we need make an estimate of $k$, as stated in the following lemma.

\begin{lemma}\label{lem:bound-k}
Let $n\geq 5000$, then
\begin{equation}\label{eq:bound-k}
n-\frac{n}{\log n} \Big(1+\frac{1.1}{\sqrt{\log n}}\Big) < k < n-\frac{n}{\log n}\Big(1-\frac{1}{\log n}\Big).
\end{equation}
\end{lemma}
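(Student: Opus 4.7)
The plan is a proof by contradiction: assuming $k$ lies outside the interval in \eqref{eq:bound-k}, I will exhibit a kite graph on $n$ vertices whose principal ratio strictly exceeds $\gamma(G^*)$, contradicting the maximality of $G^*$.

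First, I would derive an upper estimate for $\gamma(G^*)$ in terms of $k$ and $n$. Since $x_{v_k}=1$, applying \autoref{lem:gamma-upper-bound} with $j=k$ yields
\[
\gamma(G^*)\leq \frac{\sigma^k-\sigma^{-k}}{\sigma-\sigma^{-1}},
\]
and plugging this into the right-hand side of \autoref{lem:exact-sigma-bound} together with the bound $\lambda<n-k+1$ from the remark after \autoref{lem:pendant-path} gives a bound comparable to $f_n(k-1)=(n-k+1)^{k-1}$, with a mild correction factor coming from the $-1/\lambda$ term.

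Second, I would produce a lower bound on $\gamma(G^*)$ by comparison with a well-chosen kite. Fix a parameter $k^{\sharp}$ so that $k^{\sharp}-1$ lies just inside one of the monotone regions of \autoref{lem:f-Monotonicity}; for the kite $H^{\sharp}=P_{k^{\sharp}}\cdot K_{n-k^{\sharp}+1}$, the vertices $v_1,\ldots,v_{k^{\sharp}-1}$ form a pendant path, so the equality case of \autoref{lem:gamma-upper-bound} applies at $j=k^{\sharp}-1$. Combining this with the left inequality of \autoref{lem:exact-sigma-bound} and with the Tait--Tobin lower bound $\lambda(H^{\sharp})>n-k^{\sharp}$ (applied to $H^{\sharp}$ itself) produces
\[
\gamma(G^*)\geq\gamma(H^{\sharp})\gtrsim f_n(k^{\sharp}-1)
\]
up to a manageable factor involving the eigenvector entry at $v_{k^{\sharp}-1}$ in $H^{\sharp}$, which can be pinned down from the linear second-order recurrence along the pendant path.

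Third, I combine the two estimates by choosing $k^{\sharp}$ wisely. If $k$ violates the lower bound in \eqref{eq:bound-k}, I set $k^{\sharp}-1=\lfloor n-\frac{n}{\log n}(1+\frac{1}{\sqrt{\log n}})\rfloor$ so that $k-1<k^{\sharp}-1$ and both sit in the increasing region of $f_n$; \autoref{lem:f-Monotonicity} together with the extra slack encoded in the constant $1.1$ (versus $1$) in front of $1/\sqrt{\log n}$ should yield a multiplicative gap in $f_n$ large enough, for $n\geq 5000$, to overwhelm the correction factors and force $\gamma(H^{\sharp})>\gamma(G^*)$. Violation of the upper bound in \eqref{eq:bound-k} is treated symmetrically, with $k^{\sharp}-1$ chosen just past the decreasing threshold of \autoref{lem:f-Monotonicity}. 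The main technical obstacle is verifying that the multiplicative gap supplied by \autoref{lem:f-Monotonicity} actually beats the ratio of the two correction factors (involving $\lambda-1/(\lambda-1)$ against $\lambda-1/\lambda$, together with $1/y_{k^{\sharp}-1}$); this boils down to an explicit calibration in $n$, which is precisely where the hypothesis $n\geq 5000$ is expected to be consumed.
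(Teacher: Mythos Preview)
Your plan is essentially the paper's own argument: bound $\gamma(G^*)$ from above via \autoref{lem:gamma-upper-bound} and \autoref{lem:exact-sigma-bound}, bound a well-chosen kite from below, and derive a contradiction through the monotonicity of $f_n$ in \autoref{lem:f-Monotonicity}. Two points of divergence are worth noting. First, for the kite $H^{\sharp}=P_{k^{\sharp}}\cdot K_{n-k^{\sharp}+1}$ you should apply \autoref{lem:gamma-upper-bound} at $j=k^{\sharp}$ rather than $j=k^{\sharp}-1$: the full path $v_1,\ldots,v_{k^{\sharp}}$ is pendant, so equality holds there and the nuisance factor $1/y_{k^{\sharp}-1}$ never appears. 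Second, the paper does not carry correction factors along; it weakens both sides to the crude bounds $\gamma(G^*)\leq\lambda^{k-1}<(n-k+1)^{k-1}$ and $\gamma(H)>(\lambda_1(H)-1)^{j-1}>(n-j-1)^{j-1}$, obtaining the clean inequality $f_n(k-1)>f_{n-2}(j-1)$. All the numerical work then becomes a comparison of $f_n$ against $f_{n-2}$ at nearby arguments, which is handled by explicit (and lengthy) computations in Appendices~\ref{appendix:k-lower-bound} and~\ref{appendix:k-upper-bound}. Your packaging with correction factors is equivalent in spirit, but the paper's version isolates the $n\geq 5000$ numerics more cleanly and avoids estimating eigenvector entries of the comparison kite.
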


\begin{proof}
By \autoref{lem:gamma-upper-bound} and \autoref{lem:exact-sigma-bound}, we get 
\[
\gamma(G^*)\leq\frac{\sigma^k-\sigma^{-k}}{\sigma-\sigma^{-1}}\leq\lambda^{k-1}<(n-k+1)^{k-1}.
\]
On the other hand, letting $H=P_j\cdot K_{n-j+1}$, we have 
\[
\gamma(H)=\frac{\sigma(H)^j-\sigma(H)^{-j}}{\sigma(H)-\sigma(H)^{-1}}>(\lambda_1(H)-1)^{j-1}>(n-j-1)^{j-1}.
\]
Combining these two inequalities we immediately obtain 
\begin{equation}\label{eq:inequality-for-k}
(n-k+1)^{k-1}>(n-j-1)^{j-1}.
\end{equation}

For the left-hand side of \eqref{eq:bound-k}, let $j=n-1-\big\lceil \frac{n}{\log n}\big(1+\frac{1}{\sqrt{\log n}}\big)\big\rceil$.
Since 
\[
n-2-\frac{n}{\log n}\Big(1+\frac{1}{\sqrt{\log n}}\Big) < j < n-1-\frac{n-2}{\log (n-2)} \bigg(1+\frac{1}{\sqrt{\log (n-2)}}\bigg),
\] 
and $f_{n-2}(x-1)$ is increasing in $x$ whenever $2 < x < n-1-\frac{n-2}{\log (n-2)} \big(1+\frac{1}{\sqrt{\log (n-2)}}\big)$ 
by \autoref{lem:f-Monotonicity}, we deduce that
\begin{align*}
(n-j-1)^{j-1} 
& = f_{n-2}(j-1) > f_{n-2} \Big(n-3-\frac{n}{\log n}\Big(1+\frac{1}{\sqrt{\log n}}\Big)\Big) \\
& = \Big(1+\frac{n}{\log n}\Big(1+\frac{1}{\sqrt{\log n}}\Big)\Big)^{n-3-\frac{n}{\log n}\big(1+\frac{1}{\sqrt{\log n}}\big)}.
\end{align*}
If $k\leq n-\frac{n}{\log n} \big(1+\frac{1.1}{\sqrt{\log n}}\big)$, then 
\begin{align*}
(n-k+1)^{k-1} 
& = f_n(k-1)\leq f_n\Big(n-1-\frac{n}{\log n} \Big(1+\frac{1.1}{\sqrt{\log n}}\Big)\Big) \\
& = \Big(1+\frac{n}{\log n} \Big(1+\frac{1.1}{\sqrt{\log n}}\Big)\Big)^{n-1-\frac{n}{\log n} \big(1+\frac{1.1}{\sqrt{\log n}}\big)}. 
\end{align*}
By some involved calculations (details can be found in Appendix \ref{appendix:k-lower-bound}) one can show that
\begin{equation}\label{eq:k-lower-bound}
f_n\Big(n-1-\frac{n}{\log n} \Big(1+\frac{1.1}{\sqrt{\log n}}\Big)\Big)
< f_{n-2} \Big(n-3-\frac{n}{\log n}\Big(1+\frac{1}{\sqrt{\log n}}\Big)\Big).
\end{equation}
Therefore, $(n-k+1)^{k-1} < (n-j-1)^{j-1}$, contrary to \eqref{eq:inequality-for-k}. This proves the left-hand side of \eqref{eq:bound-k}.

For the right-hand side of \eqref{eq:bound-k}, let $j=n+1-\big\lceil \frac{n}{\log n}\big(1+\frac{1}{\log n}\big) \big\rceil$, 
then
\[
j > n-\frac{n}{\log n}\Big(1+\frac{1}{\log n} \Big) 
> n-1-\frac{n-2}{\log (n-2)}\Big(1+\frac{1}{\log (n-2)}\Big).
\]
By \autoref{lem:f-Monotonicity}, $f_{n-2}(x-1)$ is decreasing in $x$ if $n-1-\frac{n-2}{\log (n-2)} \big(1+\frac{1}{\log (n-2)}\big) < x < n-1$. 
Hence,
\[
(n-j-1)^{j-1} = f_{n-2}(j-1) > 
f_{n-2}\Big(n-\frac{n}{\log n}\Big(1+\frac{1}{\log n} \Big) \Big).
\]
If $k\geq n-\frac{n}{\log n}(1-\frac{1}{\log n})$, then 
\[ 
(n-k+1)^{k-1} = f_n(k-1) \leq f_n\Big(n-1-\frac{n}{\log n} \Big(1-\frac{1}{\log n}\Big)\Big).
\]
By some algebra (see Appendix \ref{appendix:k-upper-bound}) we obtain
\begin{equation}\label{eq:k-upper-bound}
f_n\Big(n-1-\frac{n}{\log n} \Big(1-\frac{1}{\log n}\Big)\Big) 
< f_{n-2}\Big(n-\frac{n}{\log n}\Big(1+\frac{1}{\log n} \Big) \Big), 
\end{equation}
and thus $(n-k+1)^{k-1} < (n-j-1)^{j-1}$, which is a contradiction to \eqref{eq:inequality-for-k}. This completes the proof.
\end{proof}

\begin{lemma}\label{lem:normal-x}
$\lambda < \|\x\|_2^2 < \lambda + 10/9$.
\end{lemma}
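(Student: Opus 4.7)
The plan is to split $\|\x\|_2^2$ according to the vertex partition $V(G^*) = \{v_k\} \sqcup N(v_k) \sqcup \{v_1, \ldots, v_{k-2}\}$ afforded by \autoref{lem:pendant-path}, and to estimate each piece. Using $x_k = 1$, write
\[
\|\x\|_2^2 = 1 + \sum_{u \in N(v_k)} x_u^2 + \sum_{i=1}^{k-2} x_i^2,
\]
and note that $\sum_{u \in N(v_k)} x_u = \lambda x_k = \lambda$ by the eigenvalue equation at $v_k$. Both bounds will reduce to elementary estimates on the two sums.

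For the lower bound $\|\x\|_2^2 > \lambda$, I would apply Cauchy--Schwarz to the $|N(v_k)| = n-k+1$ entries to get $\sum_{u \in N(v_k)} x_u^2 \geq \lambda^2/(n-k+1)$, hence $\|\x\|_2^2 \geq 1 + \lambda^2/(n-k+1)$. The desired inequality then reduces to $\lambda^2/(\lambda-1) > n-k+1$, equivalently $\lambda + 1 + 1/(\lambda-1) > n-k+1$, which is immediate from the Tait--Tobin bound $\lambda > n-k$ recorded just before \autoref{lem:bound-k}.

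For the upper bound $\|\x\|_2^2 < \lambda + 10/9$, I would use $0 < x_u \leq 1$ (so $x_u^2 \leq x_u$) to obtain $\sum_{u \in N(v_k)} x_u^2 \leq \sum_{u \in N(v_k)} x_u = \lambda$. For the pendant-path tail, the boundary condition $\lambda x_1 = x_2$ and the recurrence $\lambda x_i = x_{i-1} + x_{i+1}$ (valid for $2 \leq i \leq k-2$ by \autoref{lem:pendant-path}) solve, using $\lambda = \sigma + \sigma^{-1}$, to
\[
x_i = x_{k-1} \cdot \frac{\sigma^i - \sigma^{-i}}{\sigma^{k-1} - \sigma^{-(k-1)}}, \qquad 1 \leq i \leq k-1.
\]
Bounding $(\sigma^i - \sigma^{-i})^2 \leq \sigma^{2i}$ and using $x_{k-1} \leq 1$, a short geometric-series calculation yields $\sum_{i=1}^{k-2} x_i^2 \leq (\sigma^2-1)^{-1} (1 - \sigma^{-2(k-1)})^{-2}$.

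The only real work is checking quantitatively that this tail lies below $1/9$ once $n \geq 5000$. Since $\sigma > \lambda - 1 > n-k-1$, and \autoref{lem:bound-k} forces $n-k > (n/\log n)(1 - 1/\log n)$, already at $n = 5000$ one has $\sigma$ of order several hundred, so $1/(\sigma^2-1) \ll 1/9$ and $\|\x\|_2^2 \leq 1 + \lambda + 1/9 = \lambda + 10/9$ follows with wide margin. The main obstacle is therefore purely bookkeeping the pendant-path tail; the bound $10/9$ is extremely loose for the regime in question.
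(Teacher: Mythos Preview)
Your proof is correct and follows essentially the same approach as the paper: the lower bound via Cauchy--Schwarz on $N(v_k)$ combined with $\lambda > n-k$, and the upper bound by splitting off $\sum_{u\in N(v_k)} x_u^2 \leq \sum_{u\in N(v_k)} x_u = \lambda$ and bounding the pendant-path tail geometrically. The only cosmetic difference is that the paper controls the tail via the cruder one-step estimate $x_i < x_{i+1}/(\lambda-1)$ (yielding $\sum_{i=1}^{k-2} x_i^2 < 1/(\lambda-2)$) rather than solving the Chebyshev recurrence exactly as you do.
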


\begin{proof}
The left-hand side follows from Cauchy\,--\,Schwarz inequality. Indeed,
\[
\|\x\|_2^2 \geq x_k^2 + \sum_{v\in N(v_k)} x_v^2 
\geq 1 + \frac{\lambda^2}{n-k+1} 
> 1 + \frac{\lambda^2}{\lambda+1}
> \lambda.
\]

To prove the right-hand side, letting $x_0=0$ and noting that $x_{i-1}<x_i$ for $1\leq i\leq k-1$, we have
\[
\lambda x_i = x_{i-1} + x_{i+1} < x_i + x_{i+1},
\]
implying that $x_i<x_{i+1}/(\lambda-1)$. Hence,
\[
x_i < \frac{x_{k-1}}{(\lambda -1)^{k-1-i}} < 
\frac{1}{(\lambda -1)^{k-1-i}},~~i=1,2,\ldots,k-1.
\]
It follows that
\begin{align*}
\|\x\|_2^2 
& = \sum_{u\in N(v_k)} x_u^2 + \sum_{i=1}^{k-2} x_i^2 + x_k^2 \\
& < \sum_{u\in N(v_k)} x_u + \sum_{i=1}^{k-1} \frac{1}{(\lambda -1)^{k-1-i}} \\
& < \lambda +1 + \frac{1}{\lambda-2} \\
& < \lambda +\frac{10}{9},
\end{align*}
completing the proof.
\end{proof}

Hereafter, we use $S$ to denote the set $V(G^*)\setminus\{v_1,\ldots,v_k\}$, and write $U:= S\cap N(v_{k-1})$.
The next lemma gives a more precise upper bound for $\lambda$.

\begin{lemma}\label{lem:small-lambda}
$\lambda < n-k+3/5$.
\end{lemma}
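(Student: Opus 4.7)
The plan is to argue by contradiction. Suppose $\lambda \geq n-k+\tfrac{3}{5}$ and set $a := \lambda - (n-k) \geq \tfrac{3}{5}$. The Perron equation at $v_k$, namely $\lambda = x_{k-1} + \sum_{u \in S} x_u$, combined with $x_u \leq 1$ for every $u \in S$, immediately forces $\sum_{u \in S} x_u \leq n-k$ and hence $x_{k-1} \geq a \geq \tfrac{3}{5}$; in addition, $\sum_{u \in S}(1-x_u) = x_{k-1} - a$.

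Next, I would rewrite the estimate $\|\x\|_2^2 < \lambda + \tfrac{10}{9}$ from \autoref{lem:normal-x}. Expanding $\|\x\|_2^2 = 1 + x_{k-1}^2 + \sum_{i=1}^{k-2} x_i^2 + \sum_{u\in S} x_u^2$ and using $x_u^2 = 1 - 2(1-x_u) + (1-x_u)^2$ together with $\lambda = (n-k)+a$, the inequality collapses, after cancellation, to
\[
(1-x_{k-1})^2 + \sum_{u \in S}(1-x_u)^2 < \tfrac{10}{9} - a - \sum_{i=1}^{k-2} x_i^2 \leq \tfrac{23}{45}.
\]
The task is now to match this from below. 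The Perron equations at the vertices of $S$ can be rearranged in the form
\[
1 - x_u = \frac{T_u}{\lambda+1} \ \ (u \in U), \qquad 1 - x_u = \frac{x_{k-1} + T_u}{\lambda+1} \ \ (u \in S \setminus U),
\]
where $T_u := \sum_{w \in S \setminus (N(u) \cup \{u\})} x_w \geq 0$. Summing these identities and eliminating $\sum_{u\in S}(1-x_u)$ via step one yields the structural relation $x_{k-1}(a + 1 + |U|) = a(\lambda + 1) + \sum_{u \in S} T_u$. Since $|U| \leq n-k-1$ by the remark following \autoref{lem:pendant-path}, the set $S \setminus U$ is nonempty, so for at least one vertex we obtain $1 - x_u \geq x_{k-1}/(\lambda+1) \geq \tfrac{3}{5(\lambda+1)}$.

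The main obstacle is that these per-vertex estimates, together with the elementary Cauchy--Schwarz bound $\sum_{u\in S}(1-x_u)^2 \geq (x_{k-1}-a)^2/(n-k)$, are only of order $1/(n-k)$ and cannot alone exceed $\tfrac{23}{45}$. I expect the finish to require a case split on $|U|$: when $|U|$ is small, the structural relation $x_{k-1}(a+1+|U|) = a(\lambda+1) + \sum_u T_u$ forces $x_{k-1}$ strictly below $\tfrac{3}{5}$, contradicting step one; when $|U|$ is close to $n-k-1$, the Perron equation at $v_{k-1}$ combined with the exact expression $\sum_u T_u = x_{k-1}(a+1+|U|) - a(\lambda+1)$ pins $\sum_{u \in U}(1-x_u)^2$ to a constant-order lower bound. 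The calibration $\tfrac{10}{9} - \tfrac{3}{5} = \tfrac{23}{45}$ leaves a deliberately thin margin, and the hypothesis $n \geq 5000$ — which via \autoref{lem:bound-k} makes $n-k$ of order $n/\log n$ — is what permits the error terms $\sum_{i=1}^{k-2} x_i^2 = O((\lambda-1)^{-2})$ and other lower-order corrections to be safely absorbed into this margin.
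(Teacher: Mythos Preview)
Your approach has a genuine gap: it attempts to derive a contradiction from the Perron equations and the norm bound $\|\x\|_2^2<\lambda+10/9$ alone, but these ingredients use only the \emph{structural} consequences of extremality (\autoref{lem:pendant-path}), not extremality itself. That cannot suffice, because there are graphs satisfying all of those structural hypotheses with $\lambda\geq n-k+\tfrac35$. For instance, start from $P_k\cdot K_{n-k+1}$ and add edges from $v_{k-1}$ to all but one vertex of $S$; then $v_1,\ldots,v_{k-1}$ is still a pendant path, $v_k$ is still joined to all of $S$, and $\lambda$ is close to $n-k+1$. In such a graph with $S$ a clique one has $T_u=0$ for every $u$, so $1-x_u=0$ for $u\in U$ and $1-x_z=x_{k-1}/(\lambda+1)$ for the single $z\in S\setminus U$; your structural relation then gives $x_{k-1}=a(\lambda+1)/(a+n-k)\approx a$, whence
\[
(1-x_{k-1})^2+\sum_{u\in S}(1-x_u)^2\ \approx\ (1-a)^2+\frac{a^2}{(\lambda+1)^2}\ \leq\ \frac{4}{25}+o(1),
\]
which is comfortably below $\tfrac{23}{45}$. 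So the ``constant-order lower bound'' you hope for when $|U|$ is large simply does not exist. (Your description of the small-$|U|$ case is also inverted: the relation $x_{k-1}(a+1+|U|)\geq a(\lambda+1)$ forces $x_{k-1}>1$ there, not $x_{k-1}<\tfrac35$.)

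The paper's proof is of a different nature: it exploits extremality directly by \emph{modifying} the graph. Assuming $\lambda\geq n-k+\tfrac35$, one first checks that every $x_v\geq\tfrac35$ for $v\in N(v_k)$ and that some $z\in S$ is not adjacent to $v_{k-1}$; then one sets $G^+=G^*+\{v_{k-1}z\}$, uses Rayleigh to get $\lambda^+-\lambda>\tfrac{18}{25\|\x\|_2^2}$, and compares $\gamma(G^+)$ with $\gamma(G^*)$ via \autoref{lem:gamma-upper-bound} and \autoref{lem:exact-sigma-bound}. The gain of order $(\log n)/\lambda$ in the ratio beats the loss from the change in $x_{k-1}$, yielding $\gamma(G^+)>\gamma(G^*)$ and the desired contradiction. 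Any successful argument here must invoke the maximality of $\gamma(G^*)$ beyond its structural corollaries.
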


\begin{proof}
Assume for contradiction that $\lambda\geq n-k+3/5$, we have the following claims.

\begin{claim}\label{claim:xu-large}
$x_v\geq 3/5$ for each $v\in N(v_k)$.
\end{claim} 

\noindent {\it Proof of \autoref{claim:xu-large}}. If there is $w\in N(v_k)$ such that $x_w< 3/5$, then 
\[
n-k+\frac{3}{5}\leq\lambda x_k = \sum_{u\in N(v_k)} x_u < \frac{3}{5}+(n-k),
\]
a contradiction completing the proof of the claim.

\begin{claim}\label{claim}
There is $z\in S$ such that $v_{k-1}z\notin E(G^*)$. 
\end{claim}

\noindent {\it Proof of \autoref{claim}}. 
Assume for contradiction that $v_{k-1}v\in E(G^*)$ for each $v\in S$. From the eigenvalue equations for $v_{k-1}$ 
and $v_k$, we get $\lambda (x_{k-1} - x_k) = x_k-x_{k-1}+x_{k-2}>0$, and therefore $x_{k-1} > x_k = 1$, which leads 
to a contradiction. This completes the proof of the claim.
\par\vspace{2mm}

Now, let $G^{+} = G^* + \{v_{k-1}z\}$, and $\x^+$ be the principal eigenvector of $G^*$ with maximum entry $1$.
Our goal is to show $\gamma(G^+)>\gamma(G^*)$, and therefore deduce 
a contradiction.

Set $\lambda^+:=\lambda_1(G^+)$ and $\sigma^+:=\sigma(G^+)$ for short. By the Rayleigh principle and \autoref{claim:xu-large} 
we obtain
\[
\lambda^+-\lambda \geq \frac{\x^{\mathrm{T}} A(G^+)\x-\x^{\mathrm{T}} A(G^*)\x}{\|\x\|_2^2}
=\frac{2x_{k-1} x_z}{\|\x\|_2^2}>\frac{18}{25\|\x\|_2^2},
\]
which, together with \autoref{lem:normal-x}, gives
\begin{equation}\label{eq:lambda-plus-lambda}
\lambda^+ > \lambda + \frac{18}{25(\lambda+10/9)}.
\end{equation}

The next claim gives an upper bound for $x_{k-1}^+ - x_{k-1}$.

\begin{claim}\label{claim:xplus-x}
$x_{k-1}^+ - x_{k-1} < 1.5/\lambda$.
\end{claim}

\noindent {\it Proof of \autoref{claim:xplus-x}.} By the eigenvalue equations for $\lambda$ and $\lambda^+$ with respect 
to $v_{k-1}$, we have
\[
x_{k-1}=\frac{1}{\lambda} \Big(x_{k-2}+x_k+\sum_{v\in U} x_v\Big),~~
x_{k-1}^+=\frac{1}{\lambda^+} \Big(x_{k-2}^+ + x_k^+ + x_z^+ +\sum_{v\in U} x_v^+\Big).
\]
It follows from $\lambda^+>\lambda$ that
\[
x_{k-1}^+ - x_{k-1} <\frac{1}{\lambda} \Big( (x_z^+ -x_{k-2}) + (x_k^+ -x_k) + x_{k-2}^+ +\sum_{v\in U} (x_v^+ - x_v)\Big).
\]
Note that $x_z^+-x_{k-2}<1$, $x_k^+-x_k\leq 0$ and $x_{k-2}^+<1/(\lambda-1)$. Hence it remains to bound $\sum_{v\in U} (x_v^+ - x_v)$.
To this end, noting that
\begin{equation}\label{eq:sum-x-vs-U}
\sum_{v\in N(v_k)\setminus U} x_v\leq d_{G^*}(v_k)-|U|, \ \text{and} \
\sum_{v\in N(v_k)} x_v=\lambda\geq d_{G^*}(v_k)-\frac{2}{5},
\end{equation}
we immediately obtain 
\begin{equation}\label{eq:sum-x-vs-U-2}
|U|-\frac{2}{5} \leq\sum_{v\in U} x_v \leq |U|.
\end{equation}
Likewise, using the same argument as in \eqref{eq:sum-x-vs-U}, we derive that
\[
|U|-\frac{2}{5} \leq\sum_{v\in U} x_v^+ \leq |U|,
\]
which, together with \eqref{eq:sum-x-vs-U-2}, yields that
\[
\frac{2}{5} \leq \sum_{v\in U} (x_v^+ - x_v) \leq \frac{2}{5}.
\]
Combining the above inequalities, we arrive at
\[
x_{k-1}^+ - x_{k-1} < \frac{1+(\lambda -1)^{-1} + 2/5}{\lambda}
< \frac{3}{2\lambda}.
\]
This completes the proof of the claim.
\vspace{3mm}

To compare $\gamma(G^+)$ with $\gamma(G^*)$, we use \autoref{lem:gamma-upper-bound} and \autoref{lem:exact-sigma-bound} 
to bound them. On the one hand,
\[
\gamma(G^+) 
= \frac{(\sigma^+)^{k-1}-(\sigma^+)^{-(k-1)}}{\sigma^+ - (\sigma^+)^{-1}}\cdot\frac{1}{x_{k-1}^+}
> \bigg(\lambda^+-\frac{1}{\lambda^+-1}\bigg)^{k-3} \cdot\frac{\lambda^+}{x_{k-1}^+}.
\]
On the other hand, we have
\[
\gamma(G^*) 
= \frac{\sigma^{(k-1)}-\sigma^{-(k-1)}}{\sigma - \sigma^{-1}}\cdot\frac{1}{x_{k-1}}
< \Big(\lambda-\frac{1}{\lambda}\Big)^{k-3} \cdot\frac{\lambda}{x_{k-1}}.
\]
Setting $\alpha:=\frac{18}{25(\lambda+10/9)}$, and noting that \eqref{eq:lambda-plus-lambda} we have
\begin{align*}
\frac{\gamma(G^+)}{\gamma(G^*)} 
& > \bigg(\frac{\lambda^+ -(\lambda^+-1)^{-1}}{\lambda-\lambda^{-1}}\bigg)^{k-3} \cdot\frac{x_{k-1}}{x^+_{k-1}} \\
& > \bigg(\frac{\lambda+\alpha-(\lambda-1)^{-1}}{\lambda-\lambda^{-1}}\bigg)^{k-3} \cdot\frac{x_{k-1}}{x^+_{k-1}} \\
& = \bigg(1+\frac{\alpha\lambda-(\lambda-1)^{-1}}{\lambda^2-1}\bigg)^{k-3} \cdot\frac{x_{k-1}}{x^+_{k-1}}.
\end{align*}
By simple algebra, we find that
\[
\alpha\lambda-\frac{1}{\lambda-1} = \frac{18}{25}-\frac{4}{5(\lambda+10/9)} - \frac{1}{\lambda-1},
\]
and $\lambda>51$. Consequently,
\[
\alpha\lambda-\frac{1}{\lambda-1} > \frac{17}{25}.
\]
It follows from Bernoulli's inequality that
\[ 
\frac{\gamma(G^+)}{\gamma(G^*)}
> \bigg(1+\frac{17(k-3)}{25(\lambda^2-1)}\bigg) \cdot\frac{x_{k-1}}{x^+_{k-1}} 
> \bigg(1+\frac{16k}{25\lambda^2}\bigg) \cdot\frac{x_{k-1}}{x^+_{k-1}}.
\]
By \autoref{lem:bound-k} and $\lambda < n-k+1$, we see $k/\lambda > (\log n)/2$. Indeed,
\[ 
\frac{k}{\lambda} > \frac{k}{n+1-k} 
> \frac{\log n - \big(1+\frac{1.1}{\sqrt{\log n}}\big)}{\frac{1.1}{\sqrt{\log n}} + \frac{\log n}{n} + 1} 
> \frac{\log n - 3/2}{3/2} 
> \frac{\log n}{2},
\]
where the third inequality due to the fact that $1.1 \cdot(\log n)^{-1/2} + (\log n)/n < 1/2$. As a consequence,
\begin{equation}\label{eq:ratio-Gplus-G}
\frac{\gamma(G^+)}{\gamma(G^*)} 
> \Big(1+\frac{8\cdot\log n}{25\lambda}\Big) \cdot\frac{x_{k-1}}{x^+_{k-1}}.
\end{equation}

To finish the proof, we consider the following two cases:

\noindent {\bfseries Case 1:} The maximum eigenvector entry of $\x^+$ is still attained by vertex $v_k$. 

In light of \autoref{claim:xplus-x} and $x_{k-1}^+>3/5$, we deduce that 
\[
\frac{x_{k-1}}{x_{k-1}^+}>1-\frac{3}{2\lambda x_{k-1}^+}>1-\frac{5}{2\lambda},
\]
which, together with \eqref{eq:ratio-Gplus-G}, gives
\[
\frac{\gamma(G^+)}{\gamma(G^*)} 
> \Big(1+\frac{8\cdot\log n}{25\lambda}\Big) \Big(1-\frac{5}{2\lambda}\Big)
> 1. 
\]
\noindent {\bfseries Case 2:} The maximum eigenvector entry of $\x^+$ is no longer attained by vertex $v_k$. 

For any vertex $v\in S$, by the eigenvalue equations for $v_k$ and $v$, we get
\[
\lambda^+ x_k^+ = x_{k-1}^+ + \sum_{u\in S} x_u^+,\ \text{and}\
\lambda^+ x_v^+ \leq x_{k-1}^+ + x_k^+ + \sum_{u\in S\setminus\{v\}} x_u^+,
\]
which implies that $(\lambda^+ + 1) (x_k^+ - x_v^+) \geq 0$, and thus $x_k^+ \geq x_v^+$. Hence the maximum 
eigenvector entry of $\x^+$ must be attained by $v_{k-1}$. Therefore,
\[
\gamma(G^+) = \frac{(\sigma^+)^{k-1} - (\sigma^+)^{-(k-1)}}{\sigma^+ - (\sigma^+)^{-1}}.
\]
Applying \autoref{claim:xplus-x} to $x_{k-1}^+=1$, we see $x_{k-1} > 1-1.5/\lambda$. By \eqref{eq:ratio-Gplus-G} 
again, we have
\[
\frac{\gamma(G^+)}{\gamma(G^*)} > \Big(1+\frac{8\cdot\log n}{25\lambda}\Big)\cdot x_{k-1}
>1.
\]

Summing the above two cases, we see $\gamma(G^+)>\gamma(G^*)$, which is a contradiction to the maximality 
of $\gamma(G^*)$.
\end{proof}

\begin{lemma}\label{lem:xk-1-upper-bound}
$x_{k-1} < n^{-0.24}$.
\end{lemma}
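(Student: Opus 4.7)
The plan is to convert the desired upper bound on $x_{k-1}$ into a lower bound on $\gamma(G^*)$ via the pendant-path equality case of \autoref{lem:gamma-upper-bound}, and then beat that lower bound by comparison with a carefully chosen kite graph whose Perron eigenvalue strictly dominates $\lambda=\lambda_1(G^*)$. Since $v_1,\dots,v_{k-1}$ form a pendant path (\autoref{lem:pendant-path}), the equality clause of \autoref{lem:gamma-upper-bound} at $j=k-1$ gives
\[
\gamma(G^*)\;=\;\frac{\sigma^{k-1}-\sigma^{-(k-1)}}{\sigma-\sigma^{-1}}\cdot\frac{1}{x_{k-1}}.
\]

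The comparison graph is $H:=P_{k-1}\cdot K_{n-k+2}$, a kite on $n$ vertices whose pendant path has length $k-1$ and whose clique has size $n-k+2$. Because $K_{n-k+2}\subseteq H$, interlacing gives $\lambda_H:=\lambda_1(H)\geq n-k+1$, which combined with $\lambda<n-k+3/5$ from \autoref{lem:small-lambda} yields $\lambda_H-\lambda\geq2/5$. In particular the junction $u_{k-1}$ of $H$ attains the maximum eigenvector entry (its entry exceeds the common entry of the remaining clique vertices), so the same equality clause of \autoref{lem:gamma-upper-bound} gives $\gamma(H)=(\sigma_H^{k-1}-\sigma_H^{-(k-1)})/(\sigma_H-\sigma_H^{-1})$. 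By the maximality of $\gamma(G^*)$, $\gamma(G^*)\geq\gamma(H)$, which after using the identity $(\sigma^m-\sigma^{-m})/(\sigma-\sigma^{-1})=\sigma^{m-1}(1-\sigma^{-2m})/(1-\sigma^{-2})$ rearranges to
\[
x_{k-1}\;\leq\;\Bigl(\frac{\sigma}{\sigma_H}\Bigr)^{k-2}\cdot\frac{(1-\sigma^{-2(k-1)})(1-\sigma_H^{-2})}{(1-\sigma_H^{-2(k-1)})(1-\sigma^{-2})}.
\]
For $n\geq 5000$ the correction factor is $1+O(n^{-2})$, since both $\sigma$ and $\sigma_H$ exceed $n-k\geq 500$.

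It remains to bound the main factor. Since $\sigma'(t)>1$ on $(2,\infty)$, the inequality $\lambda_H-\lambda\geq 2/5$ lifts to $\sigma_H-\sigma\geq 2/5$, and together with $\sigma_H\leq\lambda_H\leq n-k+2$ this gives
\[
\Bigl(\frac{\sigma}{\sigma_H}\Bigr)^{k-2}\;\leq\;\Bigl(1-\frac{2/5}{n-k+2}\Bigr)^{k-2}\;\leq\;\exp\Bigl(-\frac{2(k-2)}{5(n-k+2)}\Bigr).
\]
A direct computation from the lower bound $k>n-\tfrac{n}{\log n}(1+1.1/\sqrt{\log n})$ in \autoref{lem:bound-k} shows that for every $n\geq 5000$,
\[
\frac{k-2}{n-k+2}\;>\;0.6\,\log n,
\]
so the exponent above exceeds $0.24\log n$ and the main factor is strictly below $n^{-0.24}$; absorbing the $1+O(n^{-2})$ correction preserves the strict inequality.

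The hard part is the numerical tightness at the threshold $n=5000$: one computes $(2/5)(k-2)/(n-k+2)\gtrsim 2.07$ versus $0.24\log n\approx 2.04$, a margin of only a few percent. This means the constant $3/5$ in \autoref{lem:small-lambda} and the constant $1.1$ in \autoref{lem:bound-k} are both used essentially sharply, and the bookkeeping of the correction factor and of the $-2$/$+2$ shifts in the ratio $(k-2)/(n-k+2)$ must be handled without losing more than a vanishing fraction of the gap. The asymptotic picture is much cleaner ($(\sigma/\sigma_H)^{k-2}\leq n^{-2/5+o(1)}$), so any difficulty will be confined to verifying the finite-$n$ inequality.
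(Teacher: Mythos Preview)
Your proof is correct and follows essentially the same route as the paper: compare $G^*$ with the kite $H=P_{k-1}\cdot K_{n-k+2}$, use $\lambda_1(H)>n-k+1$ together with $\lambda<n-k+3/5$ from \autoref{lem:small-lambda} to force a gap of $2/5$, and translate this into $x_{k-1}<\bigl(1-\tfrac{2/5}{n-k+2}\bigr)^{k-O(1)}<e^{-0.24\log n}$ via the lower bound on $k$ from \autoref{lem:bound-k}. The only cosmetic difference is that the paper routes the comparison through the $\lambda$-bounds of \autoref{lem:exact-sigma-bound}, whereas you work directly with $\sigma$ and $\sigma_H$ (using $\sigma'(t)>1$ to transfer the $2/5$ gap and then controlling the $1+O((n-k)^{-2})$ correction factor by hand); both arrive at the same exponential estimate with the same tight margin at $n=5000$.
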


\begin{proof}
Let $H=P_{k-1}\cdot K_{n-k+2}$. In view of \autoref{lem:gamma-upper-bound} and \autoref{lem:exact-sigma-bound} we 
conclude that
\begin{align*}
\gamma(H) 
& = \frac{\sigma(H)^{k-1}-\sigma(H)^{-(k-1)}}{\sigma(H)-\sigma(H)^{-1}} \\
& \geq \Big(\lambda_1(H)-\frac{1}{\lambda_1(H)-1}\Big)^{k-3}\cdot\lambda_1(H) \\
& > \Big(n-k+1-\frac{1}{n-k}\Big)^{k-3} (n-k+1).
\end{align*}
On the other hand, from \autoref{lem:small-lambda} we have
\[
\gamma(G^*) = \frac{\sigma^{k-1}-\sigma^{-(k-1)}}{\sigma-\sigma^{-1}}\cdot\frac{1}{x_{k-1}}
< \Big(n-k+\frac{3}{5}-\frac{1}{n-k+3/5}\Big)^{k-3} \cdot\frac{n-k+3/5}{x_{k-1}}.
\]
Since $\gamma(G^*)\geq\gamma(H)$ we deduce that
\begin{align*}
x_{k-1} 
& < \bigg(\frac{n-k+3/5-(n-k+3/5)^{-1}}{n-k+1-(n-k)^{-1}}\bigg)^{k-3}\cdot\frac{n-k+3/5}{n-k+1} \\
& = \bigg(1-\frac{2/5+(n-k+3/5)^{-1}-(n-k)^{-1}}{n-k+1-(n-k)^{-1}}\bigg)^{k-3}\cdot\frac{n-k+3/5}{n-k+1} \\
& < \Big(1-\frac{2}{5(n-k+2)}\Big)^{k-3} \\
& < e^{-\frac{0.4(k-3)}{n-k+2}}.
\end{align*}

In what follows, we shall give a lower bound for $\frac{k-3}{n-k+2}$. To this end, note that 
\[
\frac{1.1}{\sqrt{\log n}}+\frac{3\cdot\log n}{n}+1 < \frac{7}{5}.
\]
By \autoref{lem:bound-k}, we obtain 
\[ 
\frac{k-3}{n-k+2} > \frac{\log n-\big(\frac{1.1}{\sqrt{\log n}}+\frac{3\cdot\log n}{n}+1\big)}{\frac{1.1}{\sqrt{\log n}}+\frac{2\cdot\log n}{n}+1}
> \frac{\log n-7/5}{7/5} > \frac{3\cdot\log n}{5}.
\]
Finally, we get
\[
x_{k-1} < e^{-0.24\cdot\log n} = n^{-0.24},
\]
completing the proof.
\end{proof}

\begin{lemma}\label{lem:degree-vk-1-two}
The degree of $v_{k-1}$ is $2$ in $G^*$.
\end{lemma}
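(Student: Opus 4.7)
The plan is to argue by contradiction, following the template of \autoref{lem:small-lambda}. Suppose $d_{G^*}(v_{k-1}) \geq 3$, so $U := S \cap N(v_{k-1})$ is nonempty, and fix any $w \in U$. Form $G^- := G^* - \{v_{k-1}w\}$; this graph is still connected because $wv_k \in E(G^-)$. Let $\lambda^- := \lambda_1(G^-)$, $\sigma^- := \sigma(G^-)$, and let $\x^-$ be the principal eigenvector of $G^-$ scaled so its maximum entry equals $1$. The goal is to show $\gamma(G^-) > \gamma(G^*)$, which contradicts the extremality of $G^*$. A short eigenvalue-equation comparison in the style of Case~2 of \autoref{lem:small-lambda} first shows that the maximum of $\x^-$ must still be attained at $v_k$, so $v_1 v_2 \cdots v_{k-1}$ remains a pendant path realizing the minimum and maximum in both graphs, and the equality case of \autoref{lem:gamma-upper-bound} applies with $j = k-1$.

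Setting $A_m := \sigma^m - \sigma^{-m}$, the pendant-path relation $x_{k-2}/x_{k-1} = A_{k-2}/A_{k-1}$ together with the identity $\lambda A_{k-1} = A_k + A_{k-2}$ collapses the eigenvalue equation $\lambda x_{k-1} = x_{k-2} + x_k + \sum_{v \in U} x_v$ at $v_{k-1}$ into the clean formula
\begin{equation*}
\gamma(G^*) = \frac{\sigma^k - \sigma^{-k}}{(\sigma - \sigma^{-1})\bigl(1 + \sum_{v \in U} x_v\bigr)},
\end{equation*}
and analogously for $G^-$ with $U \setminus \{w\}$, $\sigma^-$, $\x^-$ in place of $U$, $\sigma$, $\x$. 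By the Rayleigh principle applied to $\x$ against $A(G^-)$, combined with \autoref{lem:normal-x},
\begin{equation*}
\lambda - \lambda^- \leq \frac{2\, x_{k-1}\, x_w}{\|\x\|_2^2} < \frac{2\, x_{k-1}\, x_w}{\lambda}.
\end{equation*}
Taking the logarithmic derivative of the first factor in $\lambda$ (of magnitude roughly $(k-1)/\lambda$, made rigorous via the two-sided bounds of \autoref{lem:exact-sigma-bound}) and computing the increment in the factor $1 + \sum_{v \in U} x_v$ (which drops by approximately $x_w$), the inequality $\gamma(G^-) > \gamma(G^*)$ reduces, after the common factor $x_w$ cancels on both sides, to
\begin{equation*}
1 + \sum_{v \in U} x_v < \frac{\lambda^2}{2(k-1)\, x_{k-1}}.
\end{equation*}

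The displayed formula for $\gamma(G^*)$ read backwards yields the identity $1 + \sum_{v \in U} x_v = x_{k-1}\, A_k/A_{k-1} < x_{k-1}\, \lambda$, so the required inequality is implied by the much cleaner condition $x_{k-1}^2 < \lambda/(2(k-1))$. Using $x_{k-1} < n^{-0.24}$ from \autoref{lem:xk-1-upper-bound}, $\lambda > n - k$ from Tait and Tobin, and the lower bound $n - k > (n/\log n)(1 - o(1))$ from \autoref{lem:bound-k}, this further reduces to the elementary inequality $n^{0.48} > 2 \log n$, which holds comfortably for all $n \geq 5000$. The main obstacle is making the logarithmic-derivative step rigorous: one needs to control the perturbation $\sum_{v \in U \setminus \{w\}} (x_v - x_v^-)$ that is suppressed in the informal approximation, and verify that it is of smaller order than $x_w$. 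This can be handled by an eigenvalue-equation comparison entirely analogous to the bound on $x_{k-1}^+ - x_{k-1}$ appearing inside the proof of \autoref{lem:small-lambda}. Once that technicality is in place, the contradiction above is complete, forcing $U = \emptyset$ and therefore $d_{G^*}(v_{k-1}) = 2$.
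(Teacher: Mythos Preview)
Your overall strategy—delete edges at $v_{k-1}$ and show $\gamma$ goes up—matches the paper's, but your implementation has a genuine gap at exactly the point you flag as the ``main obstacle,'' and the fix you propose does not work.

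You remove a single edge $v_{k-1}w$; the paper instead removes \emph{all} $|U|$ edges from $v_{k-1}$ into $S$ at once. This difference is decisive. After the paper's deletion, $v_1,\dots,v_k$ becomes a pendant path in $G^-$, so
\[
\gamma(G^-)=\frac{(\sigma^-)^{k}-(\sigma^-)^{-k}}{\sigma^- -(\sigma^-)^{-1}}
\]
with no $x_{k-1}^-$ or $\sum x_v^-$ appearing at all. The comparison with $\gamma(G^*)$ is then driven by two clean ingredients: a lower bound $x_{k-1}>1.87/\lambda$ coming from $|U|\geq 1$, and an eigenvalue drop $\lambda-\lambda^-<2x_{k-1}^2/(1-x_{k-1})<2n^{-0.46}$ coming from the bound $|U|<\lambda x_{k-1}/(1-x_{k-1})$.

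Your route, by contrast, requires showing that $\sum_{v\in U\setminus\{w\}}(x_v^- - x_v)$ is smaller in magnitude than $x_w$. You say this is ``entirely analogous'' to the argument inside \autoref{lem:small-lambda}, but the control of $\sum_{U}(x_v^+-x_v)$ there rests on the contradiction hypothesis $\lambda\geq n-k+3/5$, which pins every $x_v$ to within $2/5$ of $1$. After \autoref{lem:small-lambda} is established you have the \emph{opposite} inequality $\lambda<n-k+3/5$; the only available pinning is $x_v>1-x_{k-1}$ (from $\lambda>n-k$), so each $|x_v^- - x_v|$ is bounded merely by $O(x_{k-1})$. Summing over $|U|-1$ terms and using $|U|<\lambda x_{k-1}/(1-x_{k-1})$ bounds the perturbation only by roughly $\lambda x_{k-1}^2$. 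For $n=5000$ this is on the order of $800\cdot(0.13)^2\approx 13$, far larger than $x_w<1$. Hence the cancellation ``after the common factor $x_w$'' cannot be justified, and the reduction to $x_{k-1}^2<\lambda/(2(k-1))$ breaks down. The cleanest repair is precisely the paper's move: delete all of $U$ at once so that no perturbation term ever appears.
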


\begin{proof}
It suffices to show that $|U|=0$. Assume for contradiction that $|U|>0$. Let $G^-$ be the graph obtained from $G^*$ by 
removing these $|U|$ edges, i.e., $G^- = G-\{v_{k-1}v: v\in U\}$. 

Our goal is to show that $\gamma(G^-)>\gamma(G^*)$, and therefore get a contradiction. To this end, we first show that 
$x_u>1-x_{k-1}$ for each $u\in S$. Indeed, if there is $w\in S$ such that $x_w\leq 1-x_{k-1}$, then 
\[
n-k < \lambda x_k = x_{k-1} + \sum_{u\in S} x_u \leq x_{k-1} + x_w + (n-k-1) \leq n-k,
\]
a contradiction yielding $x_u>1-x_{k-1}$ for $u\in S$.

By eigenvalue equation we see
\[
\lambda x_{k-1} > \sum_{u\in U} x_u > |U| (1-x_{k-1}),
\]
and hence
\begin{equation}\label{eq:size-U}
|U| < \frac{\lambda x_{k-1}}{1-x_{k-1}}.
\end{equation}
Combining \eqref{eq:size-U}, the Rayleigh principle, \autoref{lem:normal-x} and \autoref{lem:xk-1-upper-bound}, we deduce that
\begin{equation}\label{eq:lambda-lambda-minus}
\lambda-\lambda^-\leq\frac{2|U|x_{k-1}}{\|\x\|^2_2} < \frac{2|U|x_{k-1}}{\lambda}
< \frac{2x_{k-1}^2}{1-x_{k-1}} < \frac{2}{n^{0.46}}. 
\end{equation}
Using the eigenvalue equation for $\lambda$ and $v_{k-1}$, and \autoref{lem:xk-1-upper-bound}, we obtain
\[
\lambda x_{k-1} > 1+ \sum_{u\in U} x_u > 1+|U|(1-x_{k-1})
> 2-n^{-0.24} > 1.87,
\]
which implies that 
\begin{equation}\label{eq:xk-1-lower-bound}
x_{k-1} > \frac{1.87}{\lambda}.
\end{equation}

Now we are ready to compare $\gamma(G^-)$ with $\gamma(G^*)$. By \autoref{lem:exact-sigma-bound} and 
\eqref{eq:xk-1-lower-bound} we see 
\[
\gamma(G^*)=\frac{\sigma^{k-1}-\sigma^{-(k-1)}}{\sigma-\sigma^{-1}}\cdot\frac{1}{x_{k-1}}
<\frac{\lambda^2}{1.87}\Big(\lambda-\frac{1}{\lambda}\Big)^{k-3}.
\]
On the other hand,
\[
\gamma(G^-)=\frac{(\sigma^-)^k-(\sigma^-)^{-k}}{\sigma^--(\sigma^-)^{-1}}
>\Big(\lambda^--\frac{1}{\lambda^- -1}\Big)^{k-1}
\]
For short, denote $\beta:=2 n^{-0.46}$. Hence, the above two inequalities, together with \eqref{eq:lambda-lambda-minus}, 
imply that
\begin{align*}
\frac{\gamma(G^-)}{\gamma(G^*)} 
& > 1.87 \bigg(\frac{\lambda^- - (\lambda^- -1)^{-1}}{\lambda - \lambda^{-1}}\bigg)^{k-3}
\bigg(\frac{\lambda^- - (\lambda^- -1)^{-1}}{\lambda}\bigg)^2 \\
& = 1.87 \bigg(1-\frac{(\lambda-\lambda^{-})+ (\lambda^- -1)^{-1} - \lambda^{-1}}{\lambda - \lambda^{-1}}\bigg)^{k-3} 
\bigg(1-\frac{(\lambda-\lambda^-)+ (\lambda^- -1)^{-1}}{\lambda}\bigg)^2 \\
& > 1.87 \bigg(1-\frac{\beta+ (\lambda -2)^{-1} - \lambda^{-1}}{\lambda - \lambda^{-1}}\bigg)^{k-3} 
\bigg(1-\frac{\beta + (\lambda-2)^{-1}}{\lambda}\bigg)^2.
\end{align*}
Using \autoref{lem:bound-k} and the fact $\lambda >n-k$, we deduce that $\lambda>500$. Then
\[
1-\frac{\beta + (\lambda-2)^{-1}}{\lambda} > 1-\frac{1}{250} > 0.99.
\]
By some algebra we can also obtain 
\[
\frac{\beta+ (\lambda -2)^{-1} - \lambda^{-1}}{\lambda - \lambda^{-1}} < \frac{1.1\beta}{\lambda}.
\] 
Combining these inequalities above we have
\begin{equation}\label{eq:gamma-minus-over-gamma-star}
\frac{\gamma(G^-)}{\gamma(G^*)} 
> 1.87\times (0.99)^2\times \Big(1-\frac{1.1\beta}{\lambda}\Big)^{k-3} 
> 1.83\times \Big(1-\frac{2.2(k-3)}{\lambda n^{0.46}}\Big).
\end{equation}
Finally, it remains to bound $(k-3)/\lambda$. By some calculations, we get
\[
\frac{k-3}{\lambda} < \frac{k-3}{n-k} < \frac{25\cdot\log n}{22},
\]
which, together with \eqref{eq:gamma-minus-over-gamma-star}, gives
\[
\frac{\gamma(G^-)}{\gamma(G^*)} > 1.83\times\Big(1-\frac{5}{2}\cdot\frac{\log n}{n^{0.46}}\Big) >1.
\]
This contradiction completing the proof of \autoref{lem:degree-vk-1-two}.
\end{proof}

We are now in the position to prove the main result of this paper. 
\vspace{3mm}

\noindent {\bfseries Proof of \autoref{thm:Main-result}}.
By \autoref{lem:pendant-path} and \autoref{lem:degree-vk-1-two}, it suffices to show that the induced subgraph $G^*[S]$ 
is a complete graph. If not, then $G^*$ is a proper subgraph of $H:=P_k\cdot K_{n-k+1}$, and hence $\lambda_1(H)>\lambda_1(G^*)$.

In view of \autoref{lem:gamma-upper-bound} we get 
\[
\gamma(H) = \frac{\sigma(H)^k - \sigma(H)^{-k}}{\sigma(H) - \sigma(H)^{-1}},~~
\gamma(G^*) = \frac{\sigma^k - \sigma^{-k}}{\sigma - \sigma^{-1}}.
\]
Since the functions $p(x)=(x^k-x^{-k})/(x-x^{-1})$ and $q(x)=x+\sqrt{x^2-4}$ are both increasing whenever $x>2$,
we obtain $\gamma(H)>\gamma(G^*)$, a contradiction completing the proof of \autoref{thm:Main-result}.

\newpage
\appendix

\begin{center}
\bfseries\large Appendix
\end{center} 

\section{Proof of Inequality \eqref{eq:g-vs-zero}}
\label{appendix:g-vs-zero}

Let $g(x)=\log (n-x)-\frac{n}{n-x}+1$, then
\begin{align*}
g\Big(n-\frac{n}{\log n} \Big(1+\frac{1}{\sqrt{\log n}}\Big)\Big) 
& = \log\Big(\frac{n}{\log n}\Big(1+\frac{1}{\sqrt{\log n}}\Big)\Big) - \frac{\log n}{1+(\log n)^{-1/2}} +1 \\
& > \log\Big(\frac{n}{\log n}\Big) - \frac{\log n}{1+(\log n)^{-1/2}} + 1.
\end{align*}
Since $(1+x)^{-1} < 1-x/2$ for $0< x<1$, we obtain
\begin{align*}
g\Big(n-\frac{n}{\log n} \Big(1+\frac{1}{\sqrt{\log n}}\Big)\Big) 
& > \log\Big(\frac{n}{\log n}\Big) - \Big(1-\frac{1}{2\sqrt{\log n}}\Big) \log n + 1 \\
& = \frac{\sqrt{\log n}}{2} - \log\log n +1.
\end{align*}
It is straightforward to verify that $x/2-2\cdot\log x+1$ attains its minimum at $x_0=4$. Thus, 
$x/2-2\cdot\log x+1 \geq 3-\log 16 > 0$, which implies that
\[ 
g\Big(n-\frac{n}{\log n} \Big(1+\frac{1}{\sqrt{\log n}}\Big)\Big) > 0.
\]
 
On the other hand, noting that $\frac{1}{1+x}>1-x$ for $x>0$, we obtain
\begin{align*}
g\Big(n-\frac{n}{\log n}\Big(1+\frac{1}{\log n}\Big)\Big)
& = \log\Big(\frac{n}{\log n}\Big(1+\frac{1}{\log n}\Big)\Big) 
- \frac{\log n}{1+(\log n)^{-1}} + 1 \\
& < \log\Big(\frac{n}{\log n}\Big(1+\frac{1}{\log n}\Big)\Big) 
- \Big(1-\frac{1}{\log n}\Big)\cdot\log n + 1 \\
& < -\log\log n + \frac{1}{\log n} + 2 \\
& < 2.125 - \log\log n \\
& < 0, 
\end{align*}
completing the proof of \eqref{eq:g-vs-zero}.

\section{Proof of Inequality \eqref{eq:k-lower-bound}} 
\label{appendix:k-lower-bound}

Here we give a proof of \eqref{eq:k-lower-bound}. Specifically, we show that
\[
\Big(1 {+} \frac{n}{\log n} \Big(1 {+} \frac{1}{\sqrt{\log n}}\Big)\Big)^{n-3-\frac{n}{\log n} \big(1+\frac{1}{\sqrt{\log n}}\big)}
> \Big(1 {+} \frac{n}{\log n} \Big(1 {+} \frac{1.1}{\sqrt{\log n}}\Big)\Big)^{n-1-\frac{n}{\log n} \big(1+\frac{1.1}{\sqrt{\log n}}\big)}. 
\]

\begin{proof}
For short, we set 
\[
A:=1+\frac{n}{\log n} \Big(1+\frac{1}{\sqrt{\log n}}\Big).
\]    
By simple algebra we find that
\begin{align*}
A\cdot\log A 
& > \Big(1+\frac{n}{\log n} \Big(1+\frac{1}{\sqrt{\log n}}\Big)\Big) (\log n-\log\log n) \\
& = n+\frac{n}{\sqrt{\log n}} - \frac{n\cdot\log\log n}{\log n} \Big(1+\frac{1}{\sqrt{\log n}}\Big)
+\log \frac{n}{\log n}.
\end{align*}
One can check that the function 
\[
h(x):=\frac{\log\log x}{\sqrt{\log x}} \Big(1+\frac{1}{\sqrt{\log x}}\Big)
\]
is decreasing in $x$ for $x>e^{e^2}$, we have $h(n)\leq h(5000)<0.986$. It follows that 
\begin{equation}\label{eq:A-logA}
A\cdot\log A > n+\frac{0.014 n}{\sqrt{\log n}} +\log \frac{n}{\log n}.
\end{equation}

For $x\geq 5000$, let 
\[
p(x):=\frac{0.014 x}{\sqrt{\log x}} +\log \frac{x}{\log x}
+\frac{x}{\log x}+\frac{1.1x}{(\log x)^{1.5}} - 40\sqrt{\log x}\cdot (\log x -1)+44.
\]
Taking the derivative of $p(x)$, we have
\begin{align*}
p'(x) 
= &\, \Big(\frac{0.014}{\sqrt{\log x}} - \frac{0.007}{(\log x)^{1.5}}\Big)
+ \Big(\frac{1}{\log x} - \frac{1}{(\log x)^2} - \frac{1.65}{(\log x)^{2.5}} - \frac{60\sqrt{\log x}}{x}\Big) \\
& +\Big(\frac{1.1}{(\log x)^{1.5}} - \frac{1}{x\cdot\log x}\Big) + \frac{1}{x} + \frac{20}{x\cdot\sqrt{\log x}}.
\end{align*}
Noting that the value in each bracket of right-hand side of the above equation greater than zero, we see $p'(x)>0$,
and hence $p(x)$ is increasing in $x$. Therefore, $p(n)\geq p(5000)>0$, which together with \eqref{eq:A-logA} gives
\begin{align*}
A\cdot \log A 
& > n - \frac{n}{\log n} - \frac{1.1n}{(\log n)^{1.5}} + 40\sqrt{\log n}\cdot (\log n -1) - 44 \\
& = \Big(n-\frac{n}{\log n} - \frac{1.1n}{(\log n)^{1.5}}\Big)
\Big(1+\frac{40\cdot (\log n)^{1.5}}{n}\Big) \\
& = \Big(n+1-\frac{0.1n}{(\log n)^{1.5}}-A\Big) \Big(1+\frac{40\cdot (\log n)^{1.5}}{n}\Big).
\end{align*}
Observe that $1+2x> (1-x)^{-1}$ for $0<x<1/2$. Then
\[
1+\frac{40\cdot (\log n)^{1.5}}{n} > \frac{1}{1-20\cdot (\log n)^{1.5}/n}.
\]
Combining these two inequalities above, we obtain 
\[
A\cdot\log A > \frac{1}{1-20\cdot (\log n)^{1.5}/n} \Big(n-\frac{0.1n}{(\log n)^{1.5}}-A\Big).
\]
Now, multiplying the above inequality by $\frac{0.1n (\log n)^{-1.5} -2 }{A}$, we deduce that
\begin{align*}
\Big(\frac{0.1n}{(\log n)^{1.5}}-2\Big) \cdot\log A 
& > \frac{0.1n}{A\cdot (\log n)^{1.5}} \Big(n-\frac{0.1n}{(\log n)^{1.5}} -A\Big) \\
& > \Big(n-\frac{0.1n}{(\log n)^{1.5}} -A\Big)\cdot\log\Big(1+\frac{0.1n}{A\cdot (\log n)^{1.5}}\Big),
\end{align*}
where the last inequality follows from the fact $x>\log (1+x)$ for $x>0$. Therefore,
\[
A^{\frac{0.1n}{(\log n)^{1.5}}-2} 
> \Big(1+\frac{0.1n}{A\cdot (\log n)^{1.5}}\Big)^{n-\frac{0.1n}{(\log n)^{1.5}} -A}.
\]

Finally, multiplying the above inequality by $A^{n - 0.1n(\log n)^{-1.5} - A}$, we arrive at the desired inequality.
\end{proof}

\section{Proof of Inequality \eqref{eq:k-upper-bound}}
\label{appendix:k-upper-bound}

Here we give a proof of \eqref{eq:k-upper-bound}. Specifically, we show that
\[
\Big(1+\frac{n}{\log n} \Big(1-\frac{1}{\log n}\Big)\Big)^{n-1-\frac{n}{\log n}\big(1-\frac{1}{\log n}\big)}
< \Big(\frac{n}{\log n} \Big(1+\frac{1}{\log n}\Big) -2\Big)^{n-\frac{n}{\log n} \big(1+\frac{1}{\log n}\big)}.
\]

\begin{proof}
For $x\geq 5000$, let 
\[
q(x)=\Big(\log\log x {+} \frac{\log\log x}{\log x} {-} \frac{1}{(\log x)^2} {-} 2\Big) x
{+} \frac{100\cdot\log x {-} 199}{99}\cdot\log x
{-} \frac{100\cdot(\log x)^3}{99}.
\]
The derivative of $q(x)$ relative to $x$ is
\[
q'(x) {=} \log\log x {-} 2 {+} \frac{200\cdot\log x}{99x} {+} \frac{\log\log x}{\log x}
{+} \frac{1}{\log x} {+} \frac{2}{(\log x)^3}
{-} \frac{\log\log x}{(\log x)^2} {-} \frac{199}{99x} {-} \frac{100\cdot (\log x)^2}{33x}.
\]
Observe that for $x\geq 5000$,
\[
\frac{\log\log x}{(\log x)^2} + \frac{199}{99x} + \frac{100\cdot (\log x)^2}{33x} 
< 0.1.
\]
It follows that $q'(x) > \log\log x -2.1 >0$, yielding that $q(x)$ is increasing in $x$. Therefore, $q(n)\geq q(5000) > 0$,
which, after some calculations, gives
\begin{equation}\label{eq:B-logB}
\frac{n}{\log n} \Big(1 {+} \frac{1}{\log n}\Big)\Big(\log n {+} \frac{1}{\log n} {-} \log\log n\Big)
{<} \Big(1 {-} \frac{(\log n)^2}{0.99n}\Big) \Big(n {-} \frac{n}{\log n}\Big(1 {-} \frac{1}{\log n}\Big) {-} 1\Big).
\end{equation}

To finish the proof, set for short
\[
B:= \frac{n}{\log n} \Big(1+\frac{1}{\log n}\Big) -2.
\]
It follows from \eqref{eq:B-logB} that
\begin{align*}
B\cdot\log B
& < \Big(1 {-} \frac{100\cdot (\log n)^2}{99n}\Big) \Big(n {-} \frac{n}{\log n}\Big(1 {-} \frac{1}{\log n}\Big) {-} 1\Big) \\
& < \Big(1-\frac{2}{2n/(\log n)^2-1}\Big) \Big(n {-} \frac{n}{\log n}\Big(1 {-} \frac{1}{\log n}\Big) {-} 1\Big),
\end{align*}
which, after some algebra, turns out to be equivalent to
\[
-\Big(\frac{2n}{(\log n)^2} -3\Big)
\Big(n {-} \frac{n}{\log n}\Big(1 {-} \frac{1}{\log n}\Big) {-} 1\Big)
< -\Big(\frac{2n}{(\log n)^2} -1\Big)\cdot B\cdot\log B.
\]
Noting that $\log (1-x) < -x$ for $x<1$, we obtain
\[
\Big(n - \frac{n}{\log n}\Big(1 - \frac{1}{\log n}\Big) - 1\Big)
\cdot\log \Big(1-\frac{2n/(\log n)^2 -3}{B} \Big)
< -\Big(\frac{2n}{(\log n)^2} -1\Big)\cdot \log B.
\]
This is equivalent to
\[
\Big(1-\frac{2n/(\log n)^2 -3}{B} \Big)^{n-\frac{n}{\log n}\big(1-\frac{1}{\log n}\big)-1}
< B^{-2n/(\log n)^2 +1}.
\]
Multiplying the above inequality by $B^{n-3+2n(\log n)^{-2} -B}$, we obtain the desired inequality.
\end{proof}

\end{document}